\newtheorem{theorem}{Theorem}[section]
\newtheorem{lemma}[theorem]{Lemma}
\newtheorem{corollary}[theorem]{Corollary}
\newtheorem{remark}[theorem]{Remark}
\newtheorem*{conj}{Conjecture}
\newcommand{\g}{\Gamma}
\title{A new characterization of the dual polar graphs}
\author[a]{Zhi Qiao}
\author[b]{Jack Koolen\thanks{Corresponding author}\setcounter{footnote}{-1}\footnote{E-mail addresses: zhiqiao@sicnu.edu.cn (Z. Qiao), koolen@ustc.edu.cn (J. Koolen)}}
\affil[a]{College of Mathematics and Software Science, Sichuan Normal University, 610068, Sichuan, PR China}
\affil[b]{School of Mathematical Sciences, University of Science and Technology of China, Wen-Tsun Wu Key Laboratory of the Chinese Academy of Sciences, 230026, Anhui, PR China}
\date{}
\begin{document}
\maketitle

\begin{abstract}
	In this paper we give a new characterization of the dual polar graphs, extending the work of Brouwer and Wilbrink on regular near polygons. Also as a consequence of our characterization we confirm a conjecture of the authors on non-bipartite distance-regular graphs with smallest eigenvalue at most $-k/2$, where $k$ is the valency of the distance-regular graph, in case of $c_2 \geq3$ and $a_1 =1$.
	
	{\bf Keywords}: Distance-regular graphs, Geometric distance-regular graphs, Dual polar graphs, Regular near polygons
	
	{\bf Mathematics Subject Classification}: 05C75, 05E30, 05C50
\end{abstract}

\section{Introduction}

In this paper we study geometric distance-regular graphs. 
They were introduced by Godsil \cite{G93a} as a generalization of geometric strongly regular graphs. Among the examples are the Johnson graphs, the Hamming graphs, the Grassmann graphs, the bilinear forms graphs and the dual polar graphs.
Koolen and Bang \cite{KB10} and Bang, Dubickas, Koolen and Moulton \cite{BDK15} showed that for any integer $m$ at least $2$, there are only finitely many coconnected non-geometric distance-regular graphs with smallest eigenvalue at least $-m$ and valency at least $3$. 
An important class of geometric distance-regular graphs are the regular near $2D$-gons. Brouwer and Wilbrink \cite{BW83} (completed in De Bruyn \cite{B06}) showed that the only thick regular $2D$-gons with intersection number $c_2$ at least three and $D$ at least four are the dual polar graphs (see \cite[Theorem 9.11]{DKT} and Theorem \ref{thm:dpl}). 
In this paper we extend the classification of Brouwer and Wilbrink. We give the the following characterization of the dual polar graphs
$^2A_{2D-1}(\sqrt{q})$, $B_D(q)$ and $C_D(q)$:

\begin{theorem} 	\label{thm:main1}
	Let $\Gamma$ be a non-bipartite geometric distance-regular graph with diameter $D\geq 4$ and $c_2\neq 1$. 
	Then the following hold:
	\begin{enumerate}[i)]
		\item If $a_i=c_i(a_1+1)$ $(i\leq 2)$ and $c_3=(a_1^2+a_1+1) (a_1 +c_2+1)$, then $\Gamma$ is the dual polar graph $^2A_{2D-1}(\sqrt{q})$. 
		\item If $a_i=c_i(a_1+1)$ $(i\leq 3)$ and $c_4=(a_1^2+2a_1+2)( c_3-(a_1+1)^2 )$, then $\Gamma$ is the dual polar graph $^2A_{2D-1}(\sqrt{q})$, $B_D(q)$ or $C_D(q)$.
	\end{enumerate}
\end{theorem}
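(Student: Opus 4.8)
The plan is to show that, under the stated hypotheses, $\Gamma$ is a thick regular near $2D$-gon, and then to invoke the Brouwer--Wilbrink classification (Theorem~\ref{thm:dpl}); the displayed relation for $c_3$ (resp.\ for $c_4$) is then used to pin down which dual polar family occurs. First, since $\Gamma$ is geometric one fixes a family $\mathcal{C}$ of Delsarte cliques partitioning the edges of $\Gamma$, each of size $s+1=1-k/\theta_{\min}$, where $\theta_{\min}$ is the smallest eigenvalue. Because $\Gamma$ is non-bipartite we have $\theta_{\min}>-k$, so $s\ge2$ and the lines are thick; and since each edge lies in a unique line, the $a_1$ common neighbours of an edge $\{x,y\}$ contain the $s-1$ points of that line other than $x,y$, whence $a_1\ge s-1$. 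Computing $\theta_{\min}$ from the part of the intersection array that the hypotheses determine, one checks that in fact $s=a_1+1$, so that $a_1=s-1$: the line on an edge already accounts for all common neighbours of its endpoints. Equivalently, the subgraph induced on the neighbourhood of any vertex is a disjoint union of $k/s$ cliques of size $s$.

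The core of the argument is to propagate this local structure to the global near-polygon axiom. Starting from the relations $a_i=c_i(a_1+1)$ given for the small values of $i$ and from the displayed formula for $c_3$ (resp.\ $c_4$), I would argue by induction on $i$ that the same relations survive for all relevant $i\le D$, and simultaneously that every vertex has a unique nearest point on every line. The main technical ingredients would be: edge-counting in the distance partition of a line with respect to a vertex at distance $j<D$ from it; the analysis of quads and, more generally, of the convex subgraphs generated by two vertices at distance $2$, $3$ (resp.\ $4$), which -- since $D\ge4$ and since the relation on $c_3$ (resp.\ $c_4$) fixes the behaviour three (resp.\ four) steps out -- should be forced to be regular near polygons of small rank with the expected parameters; and a bootstrapping that pushes this rigidity both inward and outward along $\Gamma$. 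The outcome is that $\Gamma$ is a thick regular near $2D$-gon with $c_2\ge3$; the case $c_2=2$ (which is not excluded by the hypothesis $c_2\ne1$) must be dispatched separately, as it would force parameters incompatible with the displayed formula for $c_3$.

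It then remains only to identify $\Gamma$ among the dual polar graphs. By Theorem~\ref{thm:dpl}, $\Gamma$ is a dual polar graph; substituting the intersection arrays of the six dual polar families into $c_3=(a_1^2+a_1+1)(a_1+c_2+1)$ (resp.\ into $c_4=(a_1^2+2a_1+2)(c_3-(a_1+1)^2)$) and using $D\ge4$, one finds that in case i) only $^2A_{2D-1}(\sqrt{q})$ is consistent, and in case ii) exactly $^2A_{2D-1}(\sqrt{q})$, $B_D(q)$ and $C_D(q)$ are; the parameter $q$ is recovered from $c_2$ and $a_1$. This completes the proof.

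The principal obstacle is the second step. One must upgrade purely numerical hypotheses about a handful of intersection numbers to the full geometric near-polygon property, and the danger is the existence of geometric distance-regular graphs -- of Grassmann or bilinear-forms type, say -- that reproduce the prescribed small intersection numbers without satisfying the near-polygon axiom. The extra relation on $c_3$ (resp.\ $c_4$) is precisely the input which, combined with $D\ge4$, is meant to supply enough rigidity at distance three (resp.\ four) to begin and sustain the induction; carrying this out rigorously will require a careful study of the second subconstituent and of quads in the spirit of Brouwer--Wilbrink~\cite{BW83} and De Bruyn~\cite{B06}.
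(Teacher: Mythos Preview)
Your overall plan---reduce to showing $\Gamma$ is a thick regular near $2D$-gon and then quote Brouwer--Wilbrink---is correct, and the final identification step (checking which dual polar families satisfy the displayed relation on $c_3$ or $c_4$) is exactly what the paper does. But the heart of your proposal, the ``second step,'' is a genuine gap: you sketch a combinatorial induction (edge-counting against lines, quad analysis, convex subgraphs) and then concede that carrying it out ``will require a careful study.'' No such study is supplied, and it is not clear that the numerical hypotheses alone would drive such an induction without further structural input.

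The paper bypasses this entirely with a short spectral argument that you have missed. Work in the eigenspace for the smallest eigenvalue $\theta_{\min}$, with normalized representation $x\mapsto\hat x$ and standard sequence $(u_i)$. For vertices $x,y$ at distance $j$ set
\[
F_j=\hat x-\hat y,\qquad C_j=\sum_{z\in\Gamma^1_{j-1}(x,y)}\hat z-\sum_{w\in\Gamma^{j-1}_1(x,y)}\hat w,
\]
and let $S_j^C=\langle C_j,C_j\rangle\langle F_j,F_j\rangle-\langle C_j,F_j\rangle^2\ge0$. The hypothesis $a_i=c_i a_1$ for small $i$ translates via Lemma~\ref{lm:2dgon} into $\gamma_{j-1}=1$, which forces $u_i=(-1/(a_1+1))^i$ for $i\le j$. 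A direct computation (Lemma~\ref{lm:relation}) then shows that the displayed formula for $c_3$ (resp.\ $c_4$) is \emph{precisely} the equality case $S_3^C=0$ (resp.\ $S_4^C=0$). Equality in Cauchy--Schwarz makes $C_j$ a scalar multiple of $F_j$; taking inner products with $\hat u$ for a vertex $u$ further out yields a linear recursion on the $u_i$ that propagates $u_i=(-1/(a_1+1))^i$ to all $i\le D$. Hence $\gamma_i=1$ for all $i$, so $\Gamma$ is a regular near $2D$-gon, and Theorem~\ref{thm:dpl} finishes the job.

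Two smaller points. First, ``computing $\theta_{\min}$ from the part of the intersection array that the hypotheses determine'' is not possible: eigenvalues depend on the full array. In a geometric graph one has $\theta_{\min}=-k/(a_1+1)$ because Delsarte cliques have size $a_1+2$; this is what fixes $s=a_1+1$, not a calculation from $c_2,c_3$. Second, no separate treatment of $c_2=2$ is needed: Theorem~\ref{thm:dpl} already covers $c_2\ge2$, and the Hamming graphs are eliminated at the final array check.
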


The idea of the proof is an extension of the main idea used in Lang \cite{L04}. 

As a consequence of Theorem \ref{thm:main1}, we show the following:

\begin{theorem}			\label{thm:main2}
	Let $\Gamma$ be a non-bipartite distance-regular graph with valency $k\geq 3$, diameter $D\geq 2$ and smallest eigenvalue $\theta_{\min}\leq -\frac{k}{2}$. 
	If $a_1=1$ and $c_2=3$, then $\Gamma$ is one of the following graphs:
	\begin{enumerate}[i)]
		\item one of the two distance-regular graphs with intersection array $\{8,6,1;1,3,8\}$ {\em (see \cite[p.386]{BCN})},
		\item the Witt graph associated to $M_{24}$ with intersection array $\{30,28,24;1,3,15\}$ {\em (see \cite[Section 11.4]{BCN})},
		\item the dual polar graph $B_D(2)$. 
	\end{enumerate}
\end{theorem}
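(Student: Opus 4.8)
\medskip
\noindent\textbf{Proof plan.}
The plan is to first upgrade the hypotheses to a geometric structure on $\Gamma$, then extract strong restrictions on its intersection numbers, and finally feed these into Theorem~\ref{thm:main1}, treating the diameters $2$ and $3$ separately.

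The first step is to show that $\Gamma$ is geometric with smallest eigenvalue exactly $-k/2$. Since $a_1=1$, every edge of $\Gamma$ lies in a unique triangle, and these triangles partition the edge set. A triangle is a clique of size $3$, so the Delsarte clique bound gives $3\le 1-k/\theta_{\min}$, i.e.\ $\theta_{\min}\ge -k/2$; together with the hypothesis $\theta_{\min}\le -k/2$ this forces $\theta_{\min}=-k/2$, and equality in the Delsarte bound means that every triangle is a Delsarte clique. Each vertex lies in exactly $k/2$ of these triangles, so the family of triangles realises $\Gamma$ as a geometric distance-regular graph with ``lines'' of size three; equivalently, the neighbourhood of every vertex is a perfect matching on $k$ vertices. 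If $D=2$, then $\Gamma$ is strongly regular with $\lambda=1$, $\mu=3$, and, by the above, $\theta_{\min}=-k/2$; the feasibility conditions for strongly regular graphs then force the parameters $(v,k,\lambda,\mu)=(15,6,1,3)$, whose unique graph is the dual polar graph $B_2(2)$, so outcome~(iii) holds. From now on I assume $D\ge 3$.

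The second step, which I expect to be the main obstacle, is to use the geometric structure together with $c_2=3$ and $a_1=1$ --- by analysing how the lines meet the first and second subconstituents of a vertex --- to prove that, when $D\ge 4$, the intersection numbers of $\Gamma$ satisfy the relations between the $a_i$ and the $c_i$ for $i\le 3$ together with the identity $c_4=(a_1^2+2a_1+2)\bigl(c_3-(a_1+1)^2\bigr)$ appearing in Theorem~\ref{thm:main1}(ii), and that, when $D=3$, the intersection array of $\Gamma$ is one of $\{8,6,1;1,3,8\}$, $\{14,12,8;1,3,7\}$ and $\{30,28,24;1,3,15\}$. Passing from the purely local information (geometric, lines of size three, $c_2=3$, $a_1=1$) to this amount of global control over $c_3$ and $c_4$ --- in particular, ruling out the configurations that would let the near-polygon-type relations fail while $D\ge 4$ --- is where the real work lies.

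The last step is the identification. If $D\ge 4$, Theorem~\ref{thm:main1}(ii) gives that $\Gamma$ is one of $^2A_{2D-1}(\sqrt{q})$, $B_D(q)$, $C_D(q)$; the condition $c_2=3$ forces $q=2$, so the $^2A_{2D-1}$ family is excluded, as $\sqrt{q}=\sqrt{2}$ is not a prime power, and since $C_D(2)=B_D(2)$ we conclude $\Gamma=B_D(2)$, outcome~(iii). If $D=3$, the three surviving arrays are identified from known classifications: $\{8,6,1;1,3,8\}$ gives one of the two graphs recorded in \cite[p.~386]{BCN} (outcome~(i)); $\{14,12,8;1,3,7\}$ gives the dual polar graph $B_3(2)$ (again outcome~(iii)); and $\{30,28,24;1,3,15\}$ gives a near hexagon with lines of size three, which by \cite[Section~11.4]{BCN} is the Witt graph associated to $M_{24}$ (outcome~(ii)). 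Combining the cases $D=2$, $D=3$ and $D\ge 4$ then completes the proof.
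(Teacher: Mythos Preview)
Your overall architecture (force $\theta_{\min}=-k/2$, realise $\Gamma$ as geometric on triangles, then feed into Theorem~\ref{thm:main1}) matches the paper's, but your Step~2 is where the plan and the paper diverge, and as written it is a genuine gap.

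You propose to deduce, from local incidence analysis of lines versus subconstituents, that for $D\ge 4$ the relations $a_i=c_i(a_1{+}1)$ ($i\le 3$) and $c_4=(a_1^2{+}2a_1{+}2)(c_3-(a_1{+}1)^2)$ hold. But in the paper's framework these relations are equivalent (via Lemmas~\ref{lm:2dgon} and~\ref{lm:relation}) to $\gamma_3=1$ together with $S_4^C=0$, and there is no a~priori reason these must hold: the whole point is that configurations with $\gamma_2=2$ or $\gamma_3=2$ have to be \emph{eliminated}, not shown impossible by local counting. The paper does not attempt to prove the Theorem~\ref{thm:main1}(ii) hypotheses directly. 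Instead it lets $j$ be the least index with $\gamma_j=2$ and splits:
\begin{itemize}
\item if no such $j$ exists, $\Gamma$ is a regular near $2D$-gon and one quotes \cite[Theorem~9.11]{DKT};
\item if $j\ge 5$, then $\gamma_4=1$, $\Gamma$ is $4$-bounded, the strongly closed diameter-$4$ subgraph is $B_4(2)$, and only then does Theorem~\ref{thm:main1}(ii) apply;
\item if $2\le j\le 4$, the standard sequence satisfies $u_i=(-\tfrac12)^{2j-i}$ for $i\ge j$ (Lemma~\ref{lm:ds}), which forces $D\le 2j\le 8$; Biggs' formula (Lemma~\ref{lm:biggs}) then gives $m(\theta_{\min})\le 4^{j}$, hence $k\le 2^{2j+1}$; a divisibility constraint $b'_{j-2}\mid b_{j-2}$ coming from how strongly closed subgraphs of diameter $j-1$ partition $\Gamma_{j-1}(x)\cap\Gamma(y)$ further restricts $k$; and a finite parameter search over the six pairs $(j,D)$ shows no feasible array survives.
\end{itemize}
None of the three tools in the last bullet---the eigenvector bound on $D$, the multiplicity bound on $k$, and the divisibility from strongly closed subgraphs---appears in your plan, yet they are exactly what converts ``rule out the bad configurations'' from a hope into a proof. ``Analysing how the lines meet the first and second subconstituents'' will give you $\gamma_1=1$ and perhaps information about $\gamma_2$, but it cannot by itself force $\gamma_3=1$ or pin down $c_4$.

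A smaller point: the paper dispatches $D\le 4$ wholesale by citing \cite[Theorem~1.2]{KQ17}; your plan treats $D=2,3$ by hand but folds $D=4$ into the $D\ge 4$ argument, so the same gap above also leaves $D=4$ unhandled.
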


This result shows that the following conjecture of the authors is correct if both $c_2 \geq 3$ and $a_1=1$ hold.

\begin{conj}{\em \cite{KQ17}}
	When $D$ is large enough, a non-bipartite distance-regular graph with valency $k$, diameter $D$ and smallest eigenvalue $\theta_{\min}\leq -k/2$ is one of the following graphs:
	\begin{enumerate}[i)]
		\item the odd polygons,
		\item folded $(2D+1)$-cubes,
		\item the odd graphs $O_k$,
		\item the Hamming graphs $H(D,3)$,
		\item the dual polar graphs $^2A_{2D-1}(2)$,
		\item the dual polar graphs $B_D(2)$.
	\end{enumerate}
\end{conj}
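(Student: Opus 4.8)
The governing observation is the Delsarte--Hoffman bound: since $\theta_{\min}\le -k/2<0$, every clique $C$ of $\Gamma$ satisfies $|C|\le 1-k/\theta_{\min}\le 3$, so $\Gamma$ contains no $K_4$ and each maximal clique is an edge or a triangle. I would organize the proof around the dichotomy this forces on $a_1$. If $a_1\ge 1$ then $\Gamma$ has a triangle, the Delsarte bound gives $3\le 1-k/\theta_{\min}$, and together with the hypothesis this pins down $\theta_{\min}=-k/2$ exactly, with the triangles being Delsarte cliques of size $3$. If $a_1=0$ then $\Gamma$ is triangle-free and the bound $\theta_{\min}\le -k/2$ is a genuine extra constraint. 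These two regimes require completely different tools, and splitting on them is the first step.

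In the regime $a_1\ge 1$ the plan is first to show $a_1=1$. Once $a_1=1$, each edge lies in a unique triangle, so the triangles partition the edges into Delsarte cliques and $\Gamma$ is a geometric distance-regular graph with lines of size $3$; being non-bipartite with $D\ge 4$, it is then a candidate for Theorem~\ref{thm:main1}. I would next split on $c_2$: the value $c_2=1$ should be excluded for large $D$ (no such graph appears in the conjectured list), the value $c_2=2$ should be recognized as the Hamming graph $H(D,3)$, and for $c_2\ge 3$ I would verify the intersection-number identities $a_i=c_i(a_1+1)$ together with the relations on $c_3$ (resp.\ $c_4$) required by Theorem~\ref{thm:main1}, concluding that $\Gamma$ is $^2A_{2D-1}(2)$ or $B_D(2)$; the two sporadic graphs of Theorem~\ref{thm:main2} and the Witt graph disappear once $D$ is taken large enough. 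Establishing these identities from the geometric/near-polygon structure, and above all ruling out $a_1\ge 2$, is where the work lies: the clique bound only yields $\theta_{\min}=-k/2$ and maximal clique size $3$ for $a_1\ge 2$, not a contradiction, and because the relevant parameter $m=-\theta_{\min}=k/2$ grows with $k$ the finiteness theorems of \cite{KB10,BDK15} do not apply off the shelf. I would attempt to exclude $a_1\ge 2$ through an analysis of the local graph (which must be $a_1$-regular and triangle-free) and its eigenvalue interlacing with $\theta_{\min}$.

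The case $a_1=0$ is, I expect, the main obstacle and the genuinely open core of the conjecture. Here there are no Delsarte cliques to exploit, the geometric machinery underlying Theorems~\ref{thm:main1} and~\ref{thm:main2} is unavailable, and one must instead classify non-bipartite triangle-free distance-regular graphs of large diameter with $\theta_{\min}\le -k/2$, the target list being the odd polygons, the folded $(2D+1)$-cubes and the odd graphs $O_k$. A plausible route is to combine the structure theory of triangle-free distance-regular graphs with the strong eigenvalue bound to force $c_2$ small and then to reconstruct the graph from its local structure, but no general classification of this kind is presently available, so this case is where the conjecture is likely to resist proof.

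Finally, a subtler point running through all cases is the uniformity of ``$D$ large enough''. Since the conjecture permits sporadic exceptions in small diameter (as already visible in Theorem~\ref{thm:main2}), the recognition arguments in each regime must take effect from a single diameter threshold independent of the remaining parameters; controlling this threshold simultaneously for the geometric and the triangle-free regimes is the last difficulty I anticipate.
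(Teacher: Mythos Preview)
The statement you are addressing is a \emph{conjecture}, not a theorem: the paper does not prove it and contains no proof to compare against. What the paper establishes is the special case $a_1=1$, $c_2\ge 3$ (Theorem~\ref{thm:main2} here handles $c_2=3$, and \cite{KQ1702} already dealt with $c_2\ge 4$); the remaining cases are left open.

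Your outline is a reasonable and honest sketch of how one might attack the full conjecture, and you yourself flag the two places where it is not a proof. First, in the regime $a_1\ge 1$ you correctly derive $\theta_{\min}=-k/2$ and maximal clique size~$3$ from the Delsarte bound, but excluding $a_1\ge 2$ is a genuine gap: the local-graph/interlacing idea you propose has not, to my knowledge, been carried out in this generality, and no argument in the paper addresses it. Second, the triangle-free case $a_1=0$ is, as you say, the open core of the problem; nothing in this paper (or in the geometric machinery it develops) touches it, and your proposed route via bounding $c_2$ is plausible but speculative. Within the slice the paper does treat ($a_1=1$, $c_2\ge 3$), your plan to invoke Theorem~\ref{thm:main1} is essentially what the paper does, though the paper needs additional work (the $m$-boundedness and multiplicity arguments in the proof of Theorem~\ref{thm:main2}) to verify the hypotheses of Theorem~\ref{thm:main1} rather than simply ``verifying the intersection-number identities'' as you suggest.

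In short: there is no discrepancy to report, because the paper does not claim a proof of this statement. Your proposal is a coherent programme, not a proof, and you are right about where the difficulties lie.
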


Theorem \ref{thm:main2} improves earlier results of the authors in \cite{KQ1702} in which we showed that such a graph is the dual polar graph $^2A_{2D-1}$ when $a_1=1$ and $c_2\geq 4$. 
In this paper, we first give a characterization of dual polar graphs.  
Then we show the conjecture is true with $a_1=1$ and $c_2=3$, where if $D\geq 4$, such a graph is $B_D(2)$. 

This paper is organized as follows. 
In the next section, we give the definitions and some preliminary results. 
Then in Section \ref{sec:char}, we give a characterization of dual polar graphs and prove Theorem \ref{thm:main1}. 
In the last section we give a proof of Theorem \ref{thm:main2}. 

\section{Preliminaries}
For more background, see \cite{BCN} and \cite{DKT}. 

All the graphs considered in this paper are finite, undirected and simple. 
Let $\Gamma$ be a graph with vertex set $V=V(\Gamma)$ and edge set $E=E(\Gamma)$. 
Denote $x\sim y$ if the vertices $x,y\in V$ are adjacent. 
The {\em distance} $d(x,y)=d_\Gamma(x,y)$ between two vertices $x,y\in V(\g)$ is the length of a shortest path connecting $x$ and $y$. 
The maximum distance between two vertices in $\Gamma$ is the {\em diameter} $D=D(\Gamma)$. 
We use $\Gamma_i(x)$ for the set of vertices at distance $i$ from $x$ and write, for the sake of simplicity, $\Gamma(x):=\Gamma_1(x)$. 
For two vertices $x,y\in V$, we denote $\Gamma^i_j(x,y):=\Gamma_i(x)\cap\Gamma_j(y)$. 
The {\em valency} of $x$ is the number $|\Gamma(x)|$ of vertices adjacent to it. 
A graph is {\em regular with valency $k$} if the valency of each of its vertices is $k$.
A graph Γ is called {\em bipartite} if it has no odd cycle.

A connected graph $\Gamma$ with diameter $D$ is called {\em distance-regular} if there are integers $b_i$, $c_i$ ($0\leq i \leq D$) such that for any two vertices $x,y\in V(\Gamma)$ with $d(x,y)=i$, there are exactly $c_i$ neighbors of $y$ in $\Gamma_{i-1}(x)$ and $b_i$ neighbors of $y$ in $\Gamma_{i+1}(x)$, where we define $b_D=c_0=0$. 
In particular, $\Gamma$ is a regular graph with valency $k:=b_0$. 
We define $a_i:=k-b_i-c_i$ $(0 \leq i\leq D)$ for notational convenience. Note that $a_i =|\Gamma(y)\cap\Gamma_i(x)|$ holds for any two vertices $x,y$ with $d(x,y)=i$ $(0\leq i\leq D)$.

For a distance-regular graph $\Gamma$ and a vertex $x\in V(\Gamma)$, we denote $k_i:=|\Gamma_i(x)|$ and $p_{ij}^h:=|\{w\mid w\in \g_i(x)\cap\g_j(y)\}|$ for any $y\in\g_h(x)$. 
It is easy to see that $k_i = b_0b_1\cdots b_{i-1}/(c_1c_2\cdots c_i)$ and hence it does not depend on $x$. 
The numbers $a_i$ , $b_i$ and $c_i$ ($0\leq i\leq D$) are called the {\em intersection numbers}, and the array $\{b_0,b_1,\ldots,b_{D-1};c_1,c_2,\ldots,c_D\}$ is called the {\em intersection array} of $\g$. 

Let $\g$ be a distance-regular graph with $v$ vertices and diameter $D$. 
Let $A_i$ $(0\leq i\leq D)$ be the $(0,1)$-matrix whose rows and columns are indexed by the vertices of $\g$ and the $(x, y)$-entry is $1$ whenever $d(x,y)=i$ and $0$ otherwise. 
We call $A_i$ the {\em distance-$i$ matrix} and $A:=A_1$ the {\em adjacency matrix} of $\g$. 
The {\em eigenvalues} $\theta_0>\theta_1>\cdots>\theta_D$ of the graph $\Gamma$ are just the eigenvalues of its adjacency matrix $A$. 
We denote $m_i$ the {\em multiplicity} of $\theta_i$. 

For each eigenvalue $\theta_i$ of $\Gamma$, let $U_i$ be a matrix with its columns forming an orthonormal basis for the eigenspace associated with $\theta_i$. 
And $E_i:=U_i U_i^T$ is called the {\em minimal idempotent} associated with $\theta_i$, satisfying $E_i E_j= \delta_{ij} E_j$ and $A E_i=\theta_i E_i$, where $\delta_{ij}$ is the Kronecker delta. 
Note that $vE_0$ is the all-ones matrix $J$. 

The set of distance matrices $\{A_0=I,A_1,A_2,\ldots,A_D\}$ forms a basis of a commutative $\mathbb R$-algebra $\mathcal A$, known as the {\em Bose-Mesner algebra}. 
The set of minimal idempotents $\{E_0=\frac{1}{v}J,E_1,E_2,\ldots,E_D\}$ is another basis of $\mathcal A$. 
There exist $(D+1)\times(D+1)$ matrices $P$ and $Q$ (see \cite[p.45]{BCN}), such that the following relations hold 
\begin{equation}	\label{eq:pq}
A_i=\sum_{j=0}^D P_{ji}E_j \quad \text{and} \quad E_i=\frac{1}{v}\sum_{j=0}^D Q_{ji}A_j\quad (0\leq i\leq D).
\end{equation}
Note that  $Q_{0i}=m_i$ (see \cite[Lemma 2.2.1]{BCN}). 

Let $E_i=U_i U_i^T$ be the minimal idempotent associated with $\theta_i$, where the columns of $U_i$ form an orthonormal basis of the eigenspace associated with $\theta_i$. 
We denote the $x$-th row of $\sqrt{v/m_i}U_i$ by $\hat{x}$. 
Note that $E_i\circ A_j=\frac{1}{v}Q_{ji}A_j$, hence all the vectors $\hat{x}$ are unit vectors and the cosine of the angle between two vectors $\hat{x}$ and $\hat{y}$ is $u_j(\theta_i):=\frac{Q_{ji}}{Q_{0i}}$, where $d(x,y)=j$.
The map $x\mapsto \hat{x}$ is called a {\em normalized representation} and the sequence $(u_j(\theta_i))_{j=0}^D$ is called the {\em standard sequence} of $\Gamma$, associated with $\theta_i$. 
As $A U_i=\theta_i U_i$, we have $\theta_i \hat{x}=\sum_{y\sim x} \hat{y}$, and hence the following holds:
\begin{equation}\label{eq:std}
\left\{	
	\begin{aligned}
		& c_j u_{j-1}(\theta_i)+ a_j u_j(\theta_i)+ b_j u_{j+1}(\theta_i)=\theta_i u_j(\theta_i)\quad(1\leq j\leq D-1), \\
		& c_Du_{D-1}(\theta_i)+a_D u_{D}(\theta_i)=\theta_i u_D(\theta_i),
	\end{aligned}
\right.
\end{equation}
with $u_0(\theta_i)=1$ and $u_1(\theta_i)=\frac{\theta_i}{k}$. 

\begin{lemma}{\em \cite[Theorem 2.8]{DKT}}\label{lm:biggs}
	Let $\Gamma$ be a distance-regular graph with diameter $D$ and $v$ vertices. Let $\theta$ be an eigenvalue of $\Gamma$ and $(u_i)_{i=0}^D$ be the standard sequence associated with $\theta$. Then the multiplicity $m(\theta)$ of $\theta$ as an eigenvalue of $\Gamma$ satisfies
    \begin{align}\label{eq:biggs}
	   m(\theta)=\frac{v}{\sum_{i=0}^D k_i u_i^2}. 
    \end{align}
\end{lemma}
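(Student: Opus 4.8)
The plan is to compute the squared Frobenius norm of the minimal idempotent $E=E(\theta)$ associated with $\theta$ in two ways. Write $m=m(\theta)$ and $E=UU^{T}$, where the columns of $U$ form an orthonormal basis of the $\theta$-eigenspace. Then $E$ is a symmetric idempotent of rank $m$, and since rank equals trace for a symmetric idempotent we have $\operatorname{trace}(E)=m$; as $E^{2}=E$ this gives $\operatorname{trace}(E^{2})=m$, that is,
\begin{align*}
\sum_{x,y\in V(\g)} (E)_{xy}^{2} = m .
\end{align*}

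Next I would describe the entries of $E$ via the standard sequence. For $x\in V(\g)$ the $x$-th row of $U$ equals $\sqrt{m/v}\,\hat{x}$, so $(E)_{xy}$ is the inner product of rows $x$ and $y$ of $U$, namely $(E)_{xy}=\frac{m}{v}\langle \hat{x},\hat{y}\rangle=\frac{m}{v}\,u_{d(x,y)}$ (this is also immediate from \eqref{eq:pq} together with $Q_{ji}=Q_{0i}u_{j}$ and $Q_{0i}=m_{i}$). In particular $(E)_{xy}$ depends only on $d(x,y)$.

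Now I would perform the double count: for each fixed $x$ there are exactly $k_{i}=|\g_{i}(x)|$ vertices at distance $i$ from $x$, hence exactly $v k_{i}$ ordered pairs $(x,y)$ with $d(x,y)=i$. Therefore
\begin{align*}
m \;=\; \sum_{x,y\in V(\g)} (E)_{xy}^{2} \;=\; \frac{m^{2}}{v^{2}}\sum_{i=0}^{D} v k_{i} u_{i}^{2} \;=\; \frac{m^{2}}{v}\sum_{i=0}^{D} k_{i} u_{i}^{2},
\end{align*}
and dividing by $m>0$ and rearranging gives \eqref{eq:biggs}.

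I do not expect a genuine obstacle here: the argument is just a two-way evaluation of $\operatorname{trace}(E^{2})$. The only points requiring a little care are the identity $\operatorname{trace}(E)=\operatorname{rank}(E)=m$ for the symmetric idempotent $E$, the observation that $E$ lies in the Bose--Mesner algebra so that its entries are constant on each distance class, and the elementary count of ordered pairs at each distance.
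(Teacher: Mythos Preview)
Your argument is correct: computing $\operatorname{trace}(E^{2})$ both as the rank $m$ of the idempotent and as $\sum_{x,y}(E)_{xy}^{2}=\frac{m^{2}}{v}\sum_{i}k_{i}u_{i}^{2}$ is exactly the standard derivation of Biggs' formula, and all the ingredients you use (the entry formula $(E)_{xy}=\frac{m}{v}u_{d(x,y)}$, the count of ordered pairs at each distance) are justified by the paper's preliminaries. The paper itself gives no proof of this lemma---it is simply quoted from \cite[Theorem~2.8]{DKT}---so there is nothing to compare against.
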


\begin{lemma} \label{lm:f}
	{\em \cite[Proposition 4.1.6]{BCN}}
	Let $\Gamma$ be a distance-regular graph with valency $k$ and diameter $D$. Then the following conditions hold:
	\begin{enumerate}[i)]
		\item $1=c_1\leq c_2\leq \cdots \leq c_D$,
		\item $k=b_0\geq b_1\geq \cdots \geq b_{D-1}$, 
		\item $k_i$'s ($1\leq i\leq D$) are positive integers,
		\item the multiplicities are positive integers.
	\end{enumerate}
\end{lemma}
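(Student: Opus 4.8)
This lemma is \cite[Proposition 4.1.6]{BCN}, so in the paper one would simply cite it; for the sake of this proposal I record the short argument I would give. The four statements split into two that are immediate from the definitions, namely iii) and iv), and two that need a one-line combinatorial argument, namely i) and ii).

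For iii), by definition $k_i=|\Gamma_i(x)|$ is a nonnegative integer, and the identity $k_i=b_0b_1\cdots b_{i-1}/(c_1c_2\cdots c_i)$ noted above already shows it is independent of $x$; it is positive for $1\leq i\leq D$ because, $\Gamma$ having diameter $D$, some shortest path has length $D$, and its vertex at position $i$ lies in $\Gamma_i(x)$. For iv), $m(\theta_j)$ is the dimension of an eigenspace of the real symmetric matrix $A$, hence a nonnegative integer, and it is nonzero because $\theta_0>\theta_1>\cdots>\theta_D$ are by definition precisely the distinct eigenvalues of $A$; the only point to verify is that there are indeed $D+1$ of them, which follows from $A_0=I,A_1,\dots,A_D$ being linearly independent elements of $\mathbb{R}[A]$ while at the same time spanning it, so that the minimal polynomial of $A$ has degree exactly $D+1$.

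For i), first $c_1=1$, since for $x\sim y$ the only neighbour of $y$ in $\Gamma_0(x)=\{x\}$ is $x$ itself. For $1\leq i\leq D-1$ I would prove $c_i\leq c_{i+1}$ by exhibiting an inclusion of sets: choose $x,y$ with $d(x,y)=i+1$ and a vertex $z\sim x$ with $d(z,y)=i$ (possible because $c_{i+1}\geq 1$). For any $w\in\Gamma_{i-1}(z)\cap\Gamma(y)$ the two triangle inequalities $d(x,w)\leq d(x,z)+d(z,w)=1+(i-1)$ and $d(x,w)\geq d(x,y)-d(y,w)=(i+1)-1$ force $d(x,w)=i$, so $w\in\Gamma_i(x)\cap\Gamma(y)$; since these sets have sizes $c_i$ and $c_{i+1}$ respectively, the inclusion gives $c_i\leq c_{i+1}$. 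For ii), $b_0=k$ holds by definition, and for $0\leq i\leq D-2$ I would prove $b_i\geq b_{i+1}$ in the symmetric way: choose $x,y$ with $d(x,y)=i+1$ and $x'\sim x$ with $d(x',y)=i$ (again possible since $c_{i+1}\geq 1$); for any $w\in\Gamma_{i+2}(x)\cap\Gamma(y)$ the inequalities $d(x',w)\leq d(x',y)+d(y,w)=i+1$ and $d(x',w)\geq d(x,w)-d(x,x')=(i+2)-1$ give $d(x',w)=i+1$, so $w\in\Gamma_{i+1}(x')\cap\Gamma(y)$, and comparing the sizes $b_{i+1}$ and $b_i$ of these sets finishes the argument.

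Everything here is elementary, and I do not expect any real obstacle: the only points requiring a little care are the choice of the auxiliary vertex ($z$, respectively $x'$) so that the two triangle inequalities pin the relevant distance down exactly, and, for iv), the fact that $\{A_0,\dots,A_D\}$ is a basis of the Bose--Mesner algebra, which is what guarantees that each $\theta_j$ genuinely occurs as an eigenvalue with positive multiplicity.
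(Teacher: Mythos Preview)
Your argument is correct; the paper itself gives no proof for this lemma, simply citing \cite[Proposition 4.1.6]{BCN}, which is exactly what you anticipate in your opening sentence. The standard inclusion argument you supply for i) and ii) and the trivial observations for iii) and iv) are the usual way this is done, so there is nothing to compare.
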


Let $\Pi=\{P_1, \ldots,P_t\}$ be a partition of the vertex set of a graph $\Gamma$. Let $f_{ij}$ $(1\leq i,j\leq t)$ be the average number of neighbors in $P_j$ of a vertex in $P_i$. The partition $\Pi$ is called {\em equitable} if for all $1\leq i,j\leq t$, every vertex in $P_i$ has exactly $f_{ij}$ neighbors in $P_j$.	
	
The following result was first shown by Delsarte \cite{D73} for strongly regular graphs, and extended by Godsil \cite{G93} to distance-regular graphs.

\begin{lemma}
	Let $\Gamma$ be a distance-regular graph with valency $k$ and smallest eigenvalue $\theta_{\min}$. Let $C$ be a clique in $\Gamma$ with $c$ vertices. Then $c\leq 1-\frac{k}{\theta_{\min}}$. 
\end{lemma}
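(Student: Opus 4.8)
The plan is to deduce the bound from the positive semidefiniteness of the minimal idempotent $E_D$ attached to the smallest eigenvalue $\theta_{\min}=\theta_D$, a strategy for which all the needed ingredients were recorded in this section. We may assume $c\ge 2$, since for $c=1$ the inequality is immediate; then $\Gamma$ has an edge, so $\theta_{\min}<0$. Since $E_D=U_DU_D^{T}$ is a Gram matrix it is positive semidefinite, and hence $\chi^{T}E_D\chi\ge 0$ for the characteristic vector $\chi\in\{0,1\}^{V(\Gamma)}$ of $C$.

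Next I would evaluate $\chi^{T}E_D\chi$ explicitly. Writing $E_D=\frac1v\sum_{j=0}^{D}Q_{jD}A_j$ as in \eqref{eq:pq}, and using that $C$ is a clique — so that any two distinct vertices of $C$ lie at distance exactly $1$, giving $\chi^{T}A_0\chi=c$, $\chi^{T}A_1\chi=c(c-1)$ and $\chi^{T}A_j\chi=0$ for $2\le j\le D$ — one obtains
\[
0\le \chi^{T}E_D\chi=\frac{c}{v}\bigl(Q_{0D}+(c-1)Q_{1D}\bigr).
\]
Here $Q_{0D}=m(\theta_{\min})>0$, and from the definition of the standard sequence together with the initial value $u_1(\theta_{\min})=\theta_{\min}/k$ we have $Q_{1D}=Q_{0D}\,u_1(\theta_{\min})=m(\theta_{\min})\,\theta_{\min}/k$. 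Dividing through by the positive quantity $c\,m(\theta_{\min})/v$ gives $1+(c-1)\theta_{\min}/k\ge 0$; since $\theta_{\min}/k<0$, dividing by it reverses the inequality and yields $c-1\le -k/\theta_{\min}$, that is, $c\le 1-\frac{k}{\theta_{\min}}$.

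The argument is short, and the only points that need attention are the reduction to $c\ge 2$ (so that $\theta_{\min}<0$ and the final division is legitimate) and the vanishing $\chi^{T}A_j\chi=0$ for $j\ge 2$, which is precisely where the clique hypothesis — rather than the mere fact that $C$ is a set of $c$ vertices — enters; this is the combinatorial heart of the proof, everything else being bookkeeping with the matrix $Q$ and the standard sequence.
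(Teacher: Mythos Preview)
Your argument is correct and is essentially the standard Delsarte--Godsil proof of the clique bound via the positive semidefiniteness of the minimal idempotent attached to $\theta_{\min}$. The paper itself does not supply a proof of this lemma at all; it merely cites it as a known result (Delsarte for strongly regular graphs, extended by Godsil to distance-regular graphs), so there is no ``paper's own proof'' to compare against beyond noting that what you wrote is exactly the classical argument those references contain.
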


A clique $C$ in a distance-regular graph $\Gamma$ that attains this Delsarte bound is called a {\em Delsarte clique}. 
A distance-regular graph Γ is called {\em geometric (with respect to $\mathcal C$)} if it contains a collection $\mathcal C$ of Delsarte cliques such that each edge is contained in a unique $C\in \mathcal C$. 

Let $\Gamma$ be a connected graph with diameter $D$, and let $C$ be a subset of $V(\g)$. For $i\geq 0$, let $C_i=\{x\in V\mid d(x,C)=i\}$, where $d(x,C)=\min\{d(x,c)\mid c\in C\}$. The {\em covering radius} of $C$, denoted by $\rho=\rho(C)$, is the maximum $i$ such that $C_i\neq \emptyset$. The subset (or code) C is called {\em completely regular} if the distance partition $\Pi=\{C_i\mid i=0,1,\ldots,\rho\}$ is equitable. 

\begin{lemma}	\label{lm:ds}
	{\em \cite[Proposition 4.2]{DKT}}
	Let $\Gamma$ be a geometric distance-regular graph with diameter $D$ and smallest eigenvalue $\theta_{\min}$. Then any $(a_1+2)$-clique $C$ in $\Gamma$ is a completely regular code with covering radius $D-1$ and $\gamma_iu_i+(a_1+2-\gamma_i)u_{i+1}=0$ $(0\leq i\leq D-1)$, where $(u_i)_{i=0}^D$ is the standard sequence associated with $\theta_{\min}$ and $\gamma_i=|\Gamma_i(x)\cap C|$ for a vertex $x$ at distance $i$ from $C$.  
\end{lemma}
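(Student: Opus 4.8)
The plan is to proceed in four stages: identify an $(a_1+2)$-clique as a Delsarte clique, translate the Delsarte equality into a vanishing statement for the representation of $C$ attached to $\theta_{\min}$, read off the constancy of the $\gamma_i$ and the covering radius, and finally upgrade this to equitability.

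\emph{Step 1 (an $(a_1+2)$-clique is a Delsarte clique).} Since $\Gamma$ is geometric with respect to some family $\mathcal C$, every edge $\{x,y\}$ lies in a clique $C'\in\mathcal C$ with $|C'|=1-k/\theta_{\min}$. As $C'$ is a clique, $C'\setminus\{x,y\}$ consists of common neighbours of $x$ and $y$, whence $|C'|\le a_1+2$ and so $1-k/\theta_{\min}\le a_1+2$. Conversely, the clique bound stated just above the present statement, applied to an $(a_1+2)$-clique $C$, gives $a_1+2\le 1-k/\theta_{\min}$. Hence $|C|=a_1+2=1-k/\theta_{\min}$, i.e.\ $C$ is a Delsarte clique.

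\emph{Steps 2--3 (the linear relation, the constants $\gamma_i$, the covering radius).} Write $\theta=\theta_{\min}$, let $(u_j)_{j=0}^D=(u_j(\theta))$ be its standard sequence, and let $v\mapsto\hat v$ be the associated normalized representation, so $\langle\hat v,\hat w\rangle=u_{d(v,w)}$. Put $e=\sum_{v\in C}\hat v$. Since distinct vertices of $C$ are adjacent, $\|e\|^2=|C|+|C|(|C|-1)u_1=|C|\bigl(1+(|C|-1)\theta/k\bigr)$, and $|C|-1=-k/\theta$ forces $\|e\|^2=0$, so $e=0$. Now if $d(x,C)=i$ then, $C$ being a clique, every $v\in C$ has $d(x,v)\in\{i,i+1\}$; with $\gamma_i=|\Gamma_i(x)\cap C|$, the inner product of $e=0$ with $\hat x$ gives $\gamma_iu_i+(a_1+2-\gamma_i)u_{i+1}=0$. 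For the smallest eigenvalue the standard sequence strictly alternates in sign and has no zero entry, so $u_i\neq u_{i+1}$; solving, $\gamma_i=(a_1+2)u_{i+1}/(u_{i+1}-u_i)$ depends only on $i$ and lies strictly between $0$ and $a_1+2$. The same computation applied to a hypothetical $x$ with $d(x,C)=D$ would give $(a_1+2)u_D=0$, impossible; hence $d(\cdot,C)\le D-1$ everywhere, while choosing $x$ and $c\in C$ with $d(x,c)=D$ shows $d(x,C)\ge D-1$. Thus the covering radius is exactly $D-1$ and $C_0,\dots,C_{D-1}$ are all nonempty.

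\emph{Step 4 (equitability — the main point).} It suffices to show that $M:=\langle\mathbf 1_{C_0},\dots,\mathbf 1_{C_{D-1}}\rangle$ is invariant under the adjacency matrix $A$, since this is equivalent to the distance partition $\{C_i\}$ being equitable, i.e.\ to $C$ being completely regular. By Step 3, $(A_j\mathbf 1_C)_x=|\Gamma_j(x)\cap C|$ equals $\gamma_i$ if $j=i=d(x,C)$, equals $a_1+2-\gamma_i$ if $j=i+1$, and is $0$ otherwise; hence each $A_j\mathbf 1_C$ is an $\mathbb R$-combination of $\mathbf 1_{C_0},\dots,\mathbf 1_{C_{D-1}}$, and since every power of $A$ is an $\mathbb R$-combination of the $A_j$, the smallest $A$-invariant subspace containing $\mathbf 1_C=\mathbf 1_{C_0}$ is contained in $M$. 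That subspace has dimension $\#\{j:E_j\mathbf 1_C\neq0\}\le D$ because $E_{\min}\mathbf 1_C=0$ (this is exactly $e=0$); on the other hand $A^i\mathbf 1_C$ is a nonnegative vector supported on $C_0\cup\cdots\cup C_i$ and strictly positive on $C_i$ (a geodesic provides a walk), so $\mathbf 1_C,A\mathbf 1_C,\dots,A^{D-1}\mathbf 1_C$ are linearly independent. Hence this subspace has dimension exactly $D$ and equals $M$, so $M$ is $A$-invariant, which gives equitability. The one genuinely delicate point is this last step: one must be sure the coarse information from Step 3 (that $|\Gamma_j(x)\cap C|$ is determined by $d(x,C)$) forces the whole partition to be equitable, and the dimension count for the invariant subspace generated by $\mathbf 1_C$ is what closes that gap; everything else is short once the Delsarte equality $|C|=1-k/\theta_{\min}$ is available.
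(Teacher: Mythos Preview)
Your proof is correct. The paper itself does not prove this lemma; it simply quotes it as \cite[Proposition 4.2]{DKT}. Your argument recovers the standard proof: Step~1 (identifying an $(a_1+2)$-clique with a Delsarte clique via the two-sided inequality $a_1+2\le 1-k/\theta_{\min}\le a_1+2$) and Steps~2--3 (the vanishing of $e=\sum_{v\in C}\hat v$ and the resulting relation $\gamma_iu_i+(a_1+2-\gamma_i)u_{i+1}=0$, together with the sign-alternation of the standard sequence for $\theta_{\min}$) are exactly the usual route. Your Step~4 is the one place where some care is genuinely needed, and your dimension count is clean: the fact that each $A_j\mathbf 1_C$ already lies in $M=\langle\mathbf 1_{C_0},\dots,\mathbf 1_{C_{D-1}}\rangle$ forces the cyclic $A$-module generated by $\mathbf 1_C$ to sit inside $M$, while $E_{\min}\mathbf 1_C=0$ caps its dimension at $D$ and the staircase support of $A^i\mathbf 1_C$ forces it to be exactly $D$; since $\dim M=D$ as well, $M$ is $A$-invariant and the distance partition is equitable. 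This is essentially how the result is proved in the literature (a Delsarte clique has ``dual degree'' at most $D-1$, hence covering radius $D-1$ and is completely regular), so there is no meaningful divergence from the intended argument.
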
	

Let $\Gamma$ be a geometric distance-regular graph with diameter $D$, then the parameter $\gamma_i:=|\Gamma_i(x)\cap C|$ $(0\leq i\leq D-1)$ is well-defined, where $C$ is any Delsarte clique and $x$ is any vertex at distance $i$ from $C$. 
\begin{lemma} \label{lm:gi}
	Let $\Gamma$ be a geometric distance-regular graph with diameter $D$. Then $1\leq \gamma_i\leq a_1+1$ and $\gamma_i$ is increasing ($0\leq i\leq D-1$). 
\end{lemma}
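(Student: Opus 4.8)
\medskip\noindent\emph{Proof plan.}
The plan is to establish the two assertions separately and by mostly elementary distance arguments, calling on Lemma~\ref{lm:ds} only once, to dispatch the single extremal configuration that resists a purely combinatorial treatment.

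The endpoints are immediate: $\gamma_0=1$, because a vertex at distance $0$ from $C$ lies in $C$; and $\gamma_i\ge 1$ for every $i\le D-1$, because by the definition of $d(\cdot,C)$ a vertex at distance $i$ from $C$ has some vertex of $C$ at distance exactly $i$. For monotonicity I would fix $x$ with $d(x,C)=i$ for some $1\le i\le D-1$; since $C$ is a clique, $C$ is the disjoint union of $\Gamma_i(x)\cap C$ and $\Gamma_{i+1}(x)\cap C$. Choosing $y_0\in\Gamma_i(x)\cap C$ and a neighbour $w$ of $x$ on a shortest path from $x$ to $y_0$, I would check that $d(w,C)=i-1$ and that every $v\in\Gamma_{i+1}(x)\cap C$ satisfies $d(w,v)=i$ (both follow at once from $d(w,y_0)=i-1$ together with the triangle inequality and $d(w,C)\ge d(x,C)-1$). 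These force $\Gamma_{i-1}(w)\cap C\subseteq\Gamma_i(x)\cap C$, and since $d(w,C)=i-1$ this gives $\gamma_{i-1}=|\Gamma_{i-1}(w)\cap C|\le|\Gamma_i(x)\cap C|=\gamma_i$, so $(\gamma_i)$ is non-decreasing.

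For the upper bound I would first record that a Delsarte clique $C$ has size $c:=1-k/\theta_{\min}$ with $c\le a_1+2$, since any two adjacent vertices of $C$ have the remaining $c-2$ vertices of $C$ among their $a_1$ common neighbours; in particular $\gamma_i\le c\le a_1+2$ for all $i$. If $\gamma_i\le a_1+1$ there is nothing to prove, so suppose $\gamma_i=a_1+2$; then $c=a_1+2$ and $\Gamma_i(x)\cap C=C$, i.e.\ the whole clique $C$ lies at distance exactly $i$ from $x$. This last configuration is the main obstacle: it cannot be excluded merely by counting common neighbours of an edge inside $C$. To rule it out I would argue by induction on $i$, assuming $\gamma_j\le a_1+1$ for all $j<i$. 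Since $c=a_1+2$, the Delsarte clique $C$ is an $(a_1+2)$-clique, so Lemma~\ref{lm:ds} applies and yields $\gamma_j u_j+(a_1+2-\gamma_j)u_{j+1}=0$ for $0\le j\le D-1$, where $(u_j)$ is the standard sequence of $\theta_{\min}$. For $j<i$ the inductive hypothesis gives $1\le\gamma_j\le a_1+1$, so $a_1+2-\gamma_j\ge 1$ and $u_{j+1}=-\tfrac{\gamma_j}{a_1+2-\gamma_j}\,u_j$; starting from $u_0=1\ne 0$ this propagates to $u_i\ne 0$, whereas the relation at $j=i$ with $\gamma_i=a_1+2$ reads $(a_1+2)u_i=0$, a contradiction. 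Hence $\gamma_i\le a_1+1$, which closes the induction and completes the proof.
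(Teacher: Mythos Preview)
Your proof is correct and follows essentially the same approach as the paper: the upper bound $\gamma_i\le a_1+1$ is obtained from Lemma~\ref{lm:ds} by propagating $u_j\ne 0$ through the recursion $\gamma_j u_j+(a_1+2-\gamma_j)u_{j+1}=0$, and monotonicity comes from the inclusion $\Gamma_{i-1}(w)\cap C\subseteq \Gamma_i(x)\cap C$ for adjacent $w,x$ with $d(w,C)=i-1$, $d(x,C)=i$. Your write-up is more detailed (you make the induction explicit and separately justify $|C|\le a_1+2$), and your route to the inclusion via the complement $\Gamma_{i+1}(x)\cap C$ is slightly more roundabout than the direct triangle-inequality argument, but the substance is the same.
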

\begin{proof}
	Let $\theta=-\frac{k}{a_1+1}$ be the smallest eigenvalue of $\g$ with associated standard sequence $(u_i)_{i=0}^D$. 
	By definition, we see that $1\leq \gamma_i\leq a_1+2$ $(0\leq i\leq D-1)$. 
	Note $u_0=1=\gamma_0$, by Lemma \ref{lm:ds}, it follows that $(a_1+2-\gamma_i)u_{i+1}\neq 0$, that is $u_{i+1}\neq 0$ and $\gamma_i\leq a_1+1$ $(0\leq i\leq D-1)$. 
	For any $0\leq i\leq D-2$, we can take two vertex $x,y$ with $d(x,C)=i+1$, $d(x,y)=1$ and $d(y,C)=i$. 
	Then we see that $\g_i(y)\cap C \subseteq \Gamma_{i+1}(x)\cap C$, that is $\gamma_i\leq \gamma_{i+1}$. 
\end{proof}

Let $\Gamma$ be a geometric distance-regular graph with diameter $D\geq 2$, and let $\theta_{\min}$ be the smallest eigenvalue of $\Gamma$ with associated standard sequence $(u_i)_{i=0}^D$ and normalized representation $x\mapsto \hat{x}$.  
For any $2\leq j\leq D$, choose two vertices $x,y$ with $d(x,y)=j$ and define $F_j$,  $C_j$ and $S_j^C$ with respect to $x,y$ as the following

\begin{figure}[h!]
	\begin{center}
		\begin{tikzpicture}
		\node (x) 	at (0,0) 		[circle, draw] 	{$x$};
		\node (zc) 	at (2.5,0) 		[ellipse, draw] {$\Gamma^{1}_{j-1}(x,y)$};
		\node (wc) 	at (8,0) 		[ellipse, draw]	{$\Gamma^{j-1}_{1}(x,y)$};
		\node (y) 	at (10.5,0) 	[circle, draw] 	{$y$};
		
		\path
		(x) 	edge 			(zc)
		(zc) 	edge[dashed] 	(wc)
		(wc) 	edge 			(y);
		\end{tikzpicture}
	\end{center}
	\caption{Diagram of $F_j$ and $C_j$}
\end{figure}
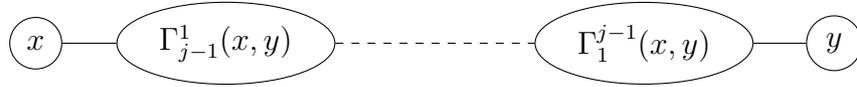

\begin{align}
	F_j   & =\hat{x}-\hat{y},                                                                            \\
	C_j   & =\sum_{z\in \Gamma^{1}_{j-1}(x,y)} \hat{z}-\sum_{w\in \Gamma^{j-1}_{1}(x,y)}\hat{w},         \\
	S_j^C & =\langle C_j,C_j\rangle \langle F_j,F_j\rangle-\langle C_j,F_j\rangle\langle C_j,F_j\rangle.
\end{align} 
By definition, we have 
\begin{align}
	\langle F_j,F_j\rangle & =2(u_0-u_j),       \label{eq:ff} \\
	\langle C_j,F_j\rangle & =2c_j(u_1-u_{j-1}).	\label{eq:cf}
\end{align}
Moreover, if $\gamma_{j-1}=1$, then 
\begin{align}
	\langle C_j,C_j\rangle =2c_j((u_0+(c_j-1)u_2)-(c_{j-1}u_{j-2}+(c_j-c_{j-1})u_j)),	\label{eq:cc}
\end{align}
and $S^C_j$ is independent of the choice of the representation. 
Note that $S^C_j\geq 0$ and equality holds if and only if $F_j$ and $C_j$ are linearly dependent. 

A distance-regular graph $\Gamma$ is {\em of order $(s, t)$} (for some integers $s$ and $t$) if it is locally the disjoint union of $t + 1$ cliques of size $s$. 
A distance-regular graph $\Gamma$ of order $(s,t)$ with diameter $D$ is called a {\em regular near $2D$-gon} if $a_i = c_ia_1$ $(1\leq i\leq D$), and such a graph is geometric. We call $\g$ {\em thick} if $s\geq 2$. 

\begin{theorem}	\label{thm:dpl}
	{\em (c.f. \cite[Theorem 6.6.1]{BCN}, \cite[Theorem 9.11]{DKT})}
	Let $\Gamma$ be a thick regular near $2D$-gon with diameter $D\geq 4$ and $c_2\neq 1$. Then $\Gamma$ is a Hamming graph or a dual polar graph. 
\end{theorem}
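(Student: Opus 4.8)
The statement is the Brouwer--Wilbrink classification of regular near polygons \cite{BW83,B06}, so the plan is to recall the architecture of its proof, which rests on the theory of near polygons (Shult--Yanushka, Cameron, Brouwer--Wilbrink). Write $s=a_1+1\ge 2$; since $\g$ is locally a disjoint union of $t+1$ cliques of size $s$, its maximal cliques (the \emph{lines}) all have size $s$ and every edge lies on a unique line. The first step is to produce \emph{quads}: because $c_2\ne 1$, any two vertices at distance $2$ have at least two common neighbours, and together with the local clique structure this is precisely the Shult--Yanushka hypothesis, so every pair of vertices at distance $2$ lies in a unique geodetically closed sub-near-polygon of diameter $2$. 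Each such quad is the collinearity graph of a thick generalized quadrangle, and by distance-regularity all quads have the same order $(s-1,t_2)$ with $t_2+1=c_2$; a short preliminary analysis, using the standard feasibility conditions for generalized quadrangles, records the possible pairs $(s,t_2)$.

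The heart of the argument is the dichotomy: \emph{either $\g$ is a Hamming graph, or every quad of $\g$ is big}, where a quad $Q$ is called big if $d(x,Q)\le 1$ for all vertices $x$. To obtain it one fixes a quad $Q$ and a vertex $x$ with $d(x,Q)=m\ge 2$, partitions $Q$ by distance from $x$, and plays the resulting counts off against the regular near-polygon identities $a_i=c_ia_1$ and the intersection numbers of $Q$; for $D\ge 4$ the arising numerical relations among $s,t_2,c_2,\dots,c_D$ are incompatible with the integrality and monotonicity constraints of Lemma \ref{lm:f} (for the $c_i$, $b_i$, $k_i$ and the multiplicities) unless $Q$ is big or $\g$ splits as a Cartesian product of complete graphs. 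I expect this to be the main obstacle: it is exactly where the hypotheses $c_2\ne1$ and $D\ge 4$ enter, and where the residual borderline parameter sets and small generalized quadrangles had to be eliminated in the completion by De Bruyn \cite{B06}, so any self-contained treatment has to reproduce that case analysis in full.

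Granted the dichotomy, the classification is quickly finished. If $\g$ is a Hamming graph we are done. Otherwise every quad of $\g$ is big, and by Cameron's characterization of dual polar spaces a near polygon all of whose quads are big is the dual polar graph of a polar space of rank $D$. Since $D\ge 4$, this polar space has rank at least three, so by the Tits--Veldkamp classification of polar spaces it is one of the classical polar spaces over a finite field, and therefore $\g$ is the corresponding dual polar graph. This completes the proof. (For the applications in Theorem \ref{thm:main1} only the non-bipartite, non-Hamming possibilities with $c_2\ge 3$ are relevant, which restricts the list to the dual polar graphs $^2A_{2D-1}$, $B_D$ and $C_D$.)
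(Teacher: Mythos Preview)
The paper does not prove Theorem~\ref{thm:dpl} at all: it is stated with the citations \cite[Theorem 6.6.1]{BCN} and \cite[Theorem 9.11]{DKT} and then used as a black box in the proof of Theorem~\ref{thm:main1}. So there is no ``paper's own proof'' to compare against; what you have written is essentially a summary of the Brouwer--Wilbrink/De Bruyn argument from the literature, not something the present paper attempts.

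As a sketch of that classical argument your outline is broadly accurate (Shult--Yanushka gives quads; the crux is the dichotomy ``Hamming or all quads big''; Cameron then yields a dual polar space; Tits classifies it). Two small slips: with the paper's convention that $\Gamma$ is of order $(s,t)$ and $s=a_1+1$, the lines have $s+1=a_1+2$ points, not $s$, and accordingly the quads are generalized quadrangles of order $(s,t_2)$ with $t_2=c_2-1$, not $(s-1,t_2)$. Also, your phrase ``either $\Gamma$ is a Hamming graph, or every quad is big'' is slightly too clean: the actual Brouwer--Wilbrink argument first shows that if some quad is not big (``classical'') then the parameters are forced into a short list, and it is precisely the elimination of the remaining exceptional parameter sets that required De Bruyn's completion \cite{B06}. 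But none of this bears on the present paper, which simply invokes the finished result.
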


\begin{lemma} \label{lm:2dgon}
	Let $\Gamma$ be a geometric distance-regular graph with diameter $D\geq 2$. Then the following holds:
	\begin{equation}	\label{eq:2dgon}
		\left\{
			\begin{aligned}
				a_i & =c_i \frac{a_1+1-\gamma_{i-1}}{\gamma_{i-1}} + b_i \frac{\gamma_i-1}{a_1+1-(\gamma_{i}-1)}\quad (1\leq i\leq D-1), \\
				a_D & =c_D \frac{a_1+1-\gamma_{D-1}}{\gamma_{D-1}}. 
			\end{aligned}
		\right.
	\end{equation}
	Moreover, $\Gamma$ is a regular near $2D$-gon if and only if $\gamma_{D-1}=1$. 
\end{lemma}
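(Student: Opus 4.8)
The plan is to use that $\Gamma$ is geometric with respect to a family $\mathcal{C}$ of Delsarte cliques, each of size $a_1+2$ (so $\theta_{\min}=-k/(a_1+1)$ and through every vertex there pass exactly $k/(a_1+1)$ members of $\mathcal{C}$, each meeting the neighbourhood of that vertex in $a_1+1$ points), together with the structural facts surrounding Lemma~\ref{lm:ds}: for every $C\in\mathcal{C}$ and every vertex $x$ with $d(x,C)=j$ one has $|\Gamma_j(x)\cap C|=\gamma_j$, and since $C$ has diameter $1$ and covering radius $D-1$ the remaining $a_1+2-\gamma_j$ vertices of $C$ lie in $\Gamma_{j+1}(x)$ (here $j\le D-1$). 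Fix $1\le i\le D$ and vertices $x,y$ with $d(x,y)=i$. Every neighbour of $y$ lies in a unique member of $\mathcal{C}$ through $y$, and for any $C\in\mathcal{C}$ with $y\in C$ we have $i-1\le d(x,C)\le i$ (the upper bound since $y\in C$, the lower since every vertex of $C$ is within distance $1$ of $y$); call $C$ of \emph{type~A} if $d(x,C)=i-1$ and of \emph{type~B} if $d(x,C)=i$.

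Suppose first that $i\le D-1$. A type-A clique $C$ has $\gamma_{i-1}$ vertices in $\Gamma_{i-1}(x)$ and $a_1+2-\gamma_{i-1}$ in $\Gamma_i(x)$ (none further, as $d(x,C)=i-1$); since $y\in\Gamma_i(x)\cap C$, among the $a_1+1$ neighbours of $y$ in $C$ exactly $\gamma_{i-1}$ lie in $\Gamma_{i-1}(x)$ and $a_1+1-\gamma_{i-1}$ lie in $\Gamma_i(x)$. A type-B clique $C$ has $\gamma_i$ vertices in $\Gamma_i(x)$ (one being $y$) and $a_1+2-\gamma_i$ in $\Gamma_{i+1}(x)$, so among the neighbours of $y$ in $C$ exactly $\gamma_i-1$ lie in $\Gamma_i(x)$ and $a_1+2-\gamma_i$ lie in $\Gamma_{i+1}(x)$. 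Since every vertex of $\Gamma_{i-1}(x)\cap\Gamma(y)$ lies in a unique type-A clique through $y$ and every vertex of $\Gamma_{i+1}(x)\cap\Gamma(y)$ in a unique type-B clique through $y$ (a type-A clique, all of whose vertices lie within distance $i$ of $x$, contains no vertex of $\Gamma_{i+1}(x)$), counting these vertices clique by clique shows that the numbers $n_A,n_B$ of type-A and type-B cliques through $y$ satisfy $n_A\gamma_{i-1}=c_i$ and $n_B(a_1+2-\gamma_i)=b_i$ (the denominator $a_1+2-\gamma_i$ is positive by Lemma~\ref{lm:gi}). Counting the vertices of $\Gamma_i(x)\cap\Gamma(y)$ clique by clique then gives
\[
a_i=n_A(a_1+1-\gamma_{i-1})+n_B(\gamma_i-1)=c_i\,\frac{a_1+1-\gamma_{i-1}}{\gamma_{i-1}}+b_i\,\frac{\gamma_i-1}{a_1+1-(\gamma_i-1)},
\]
which is the first displayed identity. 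When $i=D$, the covering radius of a Delsarte clique being $D-1$ forces $d(x,C)\le D-1$ for every $C\in\mathcal{C}$, so every clique through $y$ is of type~A; thus $n_A=k/(a_1+1)$ (the total number of cliques through $y$) and $n_A\gamma_{D-1}=c_D$, so counting $\Gamma_D(x)\cap\Gamma(y)$ clique by clique gives $a_D=\tfrac{k}{a_1+1}(a_1+1-\gamma_{D-1})=c_D\,\frac{a_1+1-\gamma_{D-1}}{\gamma_{D-1}}$.

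For the last assertion, I would first note that a geometric distance-regular graph is locally the disjoint union of the cliques $C\setminus\{x\}$ with $C\in\mathcal{C}$ and $x\in C$: there are $k/(a_1+1)$ of them, each of size $a_1+1$, and an edge count in $\Gamma(x)$ shows they account for $\tfrac{k}{a_1+1}\binom{a_1+1}{2}=\tfrac12 k a_1$ edges, i.e. all of them. Hence $\Gamma$ is of order $(a_1+1,\tfrac{k}{a_1+1}-1)$ and is a regular near $2D$-gon exactly when $a_i=c_ia_1$ for all $1\le i\le D$. If $\gamma_{D-1}=1$, then by Lemma~\ref{lm:gi} we have $1\le\gamma_0\le\cdots\le\gamma_{D-1}=1$, so $\gamma_j=1$ for all $j\le D-1$, and substituting this into the identities above yields $a_i=c_ia_1$ for every $i$. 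Conversely, if $\Gamma$ is a regular near $2D$-gon then in particular $a_D=c_Da_1$; comparing with the formula $a_D=c_D\,\frac{a_1+1-\gamma_{D-1}}{\gamma_{D-1}}$ and cancelling $c_D>0$ gives $a_1\gamma_{D-1}=a_1+1-\gamma_{D-1}$, that is $(a_1+1)\gamma_{D-1}=a_1+1$, so $\gamma_{D-1}=1$.

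I expect the only real work to be the bookkeeping in the middle step — tracking the two clique types and checking that each relevant neighbour of $y$, and each relevant vertex of a clique through $y$, is counted exactly once — and the boundary case $i=D$, where there is no type-B contribution and one instead invokes the covering radius of a Delsarte clique. The local decomposition into cliques needed for the ``moreover'' part is routine once the edge count is spelled out.
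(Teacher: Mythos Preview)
Your proof is correct, but it takes a genuinely different route from the paper's. The paper argues algebraically: it takes the standard sequence $(u_i)$ associated with $\theta_{\min}=-k/(a_1+1)$, uses Lemma~\ref{lm:ds} to write $u_{i+1}/u_i=-\gamma_i/(a_1+2-\gamma_i)$ and $u_{i-1}/u_i=-(a_1+2-\gamma_{i-1})/\gamma_{i-1}$, substitutes these into the three-term recurrence~\eqref{eq:std} at $\theta=\theta_{\min}$, and solves for $a_i$. You instead argue combinatorially, partitioning the Delsarte cliques through $y$ into types~A and~B according to their distance from $x$ and reading off $c_i$, $a_i$, $b_i$ from how each clique meets $\Gamma_{i-1}(x)$, $\Gamma_i(x)$, $\Gamma_{i+1}(x)$. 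Your approach is more elementary and self-contained---it needs from Lemma~\ref{lm:ds} only that $\gamma_i$ is well-defined and that the covering radius is $D-1$, not the eigenvector relation itself---and it makes transparent \emph{why} the formula has two terms (one per clique type). The paper's approach is shorter once the spectral machinery is in place, and it is the viewpoint that drives the later arguments in Section~\ref{sec:char}. For the ``moreover'' clause you also make explicit, via the edge count in $\Gamma(x)$, that a geometric distance-regular graph is automatically of order $(a_1+1,\,k/(a_1+1)-1)$, a point the paper leaves implicit.
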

\begin{proof}
	Let $(u_i)_{i=0}^D$ be the standard sequence associated with the smallest eigenvalue $\theta_{\min}=-\frac{k}{a_1+1}$. 
	Then we see that $1\leq \gamma_i\leq a_1+1$ ($0\leq i\leq D-1$) by Lemma \ref{lm:gi}, and  Lemma \ref{lm:ds} implies 
	\begin{equation}\label{eq:ui}
	\left\{
		\begin{aligned}
			u_{i+1} & =	-\frac{\gamma_{i}}{a_1+2-\gamma_{i}}u_{i},                             \\
			u_{i-1} & =	-\frac{a_1+2-\gamma_{i-1}}{\gamma_{i-1}}u_{i} \quad (1\leq i\leq D-1).
		\end{aligned}
	\right.
	\end{equation}
	Then substitute $\theta=-\frac{c_i+a_i+b_i}{a_1+1}$ and Equation (\ref{eq:ui}) into Equation (\ref{eq:std}), we obtain Equation (\ref{eq:2dgon}). 
	And we see that $\gamma_{D-1}=1$ if and only if $a_i=c_i a_1$ $(0\leq i\leq D)$, that is $\g$ is a regular near $2D$-gon. 
\end{proof}

A subgraph $\Delta$ of $\g$ is called {\em strongly closed} if $z\in V(\Delta)$ for all vertices $x, y \in V(\Delta)$ and $z\in V(\Gamma)$, such that $d_\Gamma(x,z)+d_\Gamma(z,y)\leq d_\Gamma(x,y)+1$. 
A distance-regular graph $\Gamma$ with diameter $D$ is said to be {\em $m$-bounded} for some $m =1,2,\ldots , D$ if for all $i=1,2,\ldots, m$ and all vertices $x$ and $y$ at distance $i$ there exists a strongly closed subgraph $\Delta(x,y)$ with diameter $i$, containing $x$ and $y$ as vertices. 

\begin{lemma}{\em (c.f. \cite[Theorem 1.1]{H99})}	\label{lm:mbd}
	Let $\Gamma$ be a non-bipartite geometric distance-regular graph with diameter $D\geq 3$ and $c_2\neq 1$. Then the following are equivalent:
	\begin{enumerate}[i)]
		\item $\gamma_{m}=1$,
		\item $\Gamma$ is $m$-bounded. 
	\end{enumerate}
\end{lemma}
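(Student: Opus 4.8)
The plan is to translate the geometric condition $\gamma_m=1$ into a condition on the intersection numbers and then invoke \cite[Theorem 1.1]{H99}. The essential step is the dictionary: for $1\le m\le D-1$,
\[
\gamma_m=1\quad\Longleftrightarrow\quad a_i=c_ia_1\ \text{ for all }\ 1\le i\le m.
\]
For ``$\Rightarrow$'', the monotonicity of $(\gamma_i)$ in Lemma~\ref{lm:gi} forces $\gamma_i=1$ for every $0\le i\le m$, and substituting $\gamma_{i-1}=\gamma_i=1$ into Equation~\eqref{eq:2dgon} leaves exactly $a_i=c_ia_1$. For ``$\Leftarrow$'' one inducts on $i$: once $\gamma_{i-1}=1$ is known, Equation~\eqref{eq:2dgon} reads $a_i=c_ia_1+b_i(\gamma_i-1)/(a_1+2-\gamma_i)$, and since $a_i=c_ia_1$, $b_i>0$ (as $i\le D-1$), and $a_1+2-\gamma_i\ge1$ (again by Lemma~\ref{lm:gi}), we get $\gamma_i=1$.

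Granting the dictionary, the lemma is exactly the assertion that $\Gamma$ is $m$-bounded if and only if $a_i=c_ia_1$ for $1\le i\le m$. Since $\Gamma$ is geometric and non-bipartite we have $a_1\ge1$, and $c_2\neq1$ gives $c_2\ge2$, so the hypotheses of \cite[Theorem 1.1]{H99} are in force and that theorem gives the equivalence; if Hiraki's theorem is stated for a slightly different class of graphs, its proof carries over here essentially unchanged. The substantive half is the implication that $a_i=c_ia_1$ for $1\le i\le m$ forces $\Gamma$ to be $m$-bounded: proved directly, one would induct on $i\le m$, taking for vertices $x,y$ at distance $i$ the candidate subgraph induced on $\{x\}\cup\bigcup_{x'}\Delta(x',y)$, where $x'$ runs over the $c_i$ neighbours of $x$ with $d(x',y)=i-1$ and each $\Delta(x',y)$ is the diameter-$(i-1)$ strongly closed subgraph supplied by the induction, and then verifying that this candidate is strongly closed of diameter $i$. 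Checking strong closure --- that no short geodesic between two of its vertices leaves it --- is precisely where $c_2\ge2$ and $a_i=c_ia_1$ are consumed, through the near-polygon counting arguments of Shult--Yanushka and Brouwer--Wilbrink \cite{BW83} (first quads, then hexes, and so on). This is the step I expect to be the genuine obstacle, and the reason one quotes \cite[Theorem 1.1]{H99} rather than reproving it.

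The reverse implication, that $\Gamma$ being $m$-bounded forces $\gamma_m=1$, can be argued directly, as follows. Suppose $\gamma_m\ge2$, and choose a Delsarte clique $C$ and a vertex $x$ with $d(x,C)=m$ (possible since a Delsarte clique has covering radius $D-1$ by Lemma~\ref{lm:ds}), so that $|\Gamma_m(x)\cap C|=\gamma_m\ge2$. Fix $y\in\Gamma_m(x)\cap C$ and set $\Delta:=\Delta(x,y)$, which is strongly closed of diameter $m$ since $\Gamma$ is $m$-bounded. Any $y'\in\Gamma_m(x)\cap C$ satisfies $d(x,y')+d(y',y)\le m+1=d(x,y)+1$, hence $y'\in\Delta$ by strong closure (applied with $x,y\in\Delta$); thus $\Delta$ contains an adjacent pair $y\neq y'$ of $C$, hence all of their common neighbours, and hence --- since $|C|=a_1+2$, so these common neighbours include $C\setminus\{y,y'\}$ --- the whole clique $C$. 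A strongly closed subgraph is geodetically closed, hence isometrically embedded, so every vertex of $C\subseteq\Delta$ lies within distance $m$ of $x\in\Delta$ (as $\Delta$ has diameter $m$). Combined with $d(x,C)=m$ this forces $d(x,c)=m$ for all $c\in C$, whence $\gamma_m=|C|=a_1+2$, contradicting $\gamma_m\le a_1+1$ from Lemma~\ref{lm:gi}. Hence $\gamma_m=1$.
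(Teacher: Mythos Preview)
Your proof is correct and follows essentially the same strategy as the paper: reduce the geometric condition $\gamma_m=1$ to the arithmetic condition $a_i=c_ia_1$ for $1\le i\le m$ via Lemma~\ref{lm:2dgon} and Lemma~\ref{lm:gi}, then invoke an external result for the equivalence of that arithmetic condition with $m$-boundedness. The paper uses \cite[Proposition 11.3]{DKT} for both directions (together with \cite[Theorem 5.2.1]{BCN} to obtain $c_i\ge c_{i-1}+1$ as an intermediate hypothesis), whereas you cite \cite[Theorem 1.1]{H99} directly, which is natural given that the lemma already points there. Your self-contained argument for $(ii)\Rightarrow(i)$ via strong closure of $\Delta(x,y)$ swallowing the whole Delsarte clique and forcing $\gamma_m=a_1+2$ is a genuine addition not present in the paper and removes the need for any external reference in that direction; it is a cleaner way to handle that half.
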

\begin{proof}
	Assume $\gamma_{m}=1$. Then $\gamma_i=1$ ($0\leq i\leq m$) and Lemma \ref{lm:2dgon} implies $a_i=c_ia_1$ ($1\leq i\leq m$). 
	Since $c_2\neq 1$, by \cite[Theorem 5.2.1]{BCN}, we see $c_i\geq c_{i-1}+1$ $(1\leq i\leq m)$. 
	Then \cite[Proposition 11.3]{DKT} implies that $\Gamma$ is $m$-bounded. 
	
	Now we assume $\g$ is $m$-bounded. Then \cite[Proposition 11.3]{DKT} implies $a_i=c_ia_1$ ($1\leq i\leq m$). 
	It follows from Lemma \ref{lm:2dgon} that $\gamma_i=1$ ($0\leq i\leq m$).  
\end{proof}

\section{Characterization of dual polar graphs}	\label{sec:char}

In this section we will characterize the dual polar graphs. 

\begin{lemma}
	Let $\Gamma$ be a non-bipartite geometric distance-regular graph with diameter $D\geq 4$, and let $\theta\neq k$ be an eigenvalue of $\Gamma$ with associated standard sequence $(u_i)_{i=0}^D$. If there exists some $j$ satisfying $3\leq j\leq D-1$, such that $S_j^C=0$, then the following hold: 
	\begin{align}\label{eq:rec}
	(u_i-u_{i-j+2})=\frac{u_1-u_{j-1}}{u_0-u_j}(u_{i+1}-u_{i-j+1})\quad (j\leq i\leq D-1).
	\end{align}
\end{lemma}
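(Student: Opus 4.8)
The plan is to exploit the hypothesis $S_j^C=0$, which by definition means the vectors $F_j$ and $C_j$ are linearly dependent, and to combine this with the standard recurrence \eqref{eq:std} for the sequence $(u_i)$. First I would record what $S_j^C=0$ gives us explicitly: since $\langle F_j,F_j\rangle=2(u_0-u_j)\neq 0$ (the graph is non-bipartite and $\theta\neq k$, so $u_j\neq u_0$), there is a scalar $\lambda$ with $C_j=\lambda F_j=\lambda(\hat x-\hat y)$, and pairing with $F_j$ shows $\lambda=\langle C_j,F_j\rangle/\langle F_j,F_j\rangle=c_j(u_1-u_{j-1})/(u_0-u_j)$, which is exactly the ratio appearing on the right-hand side of \eqref{eq:rec} up to the factor $c_j$. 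Here I should use that $\gamma_{j-1}=1$ so that $C_j$, $S_j^C$ are representation-independent; this needs to be checked, but $j\le D-1$ together with the setup preceding the lemma should supply it (if not, one argues $S_j^C=0$ forces it, via Lemma~\ref{lm:ds} and Lemma~\ref{lm:2dgon}).

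The core computation is then to take the identity $C_j=\lambda F_j$ and apply the adjacency operator, i.e. use $\theta\,\hat z=\sum_{w\sim z}\hat w$. The idea, following Lang \cite{L04}, is to consider an auxiliary vertex $u$ at a controlled distance from both $x$ and $y$ and to compute $\langle \hat u, C_j\rangle$ in two ways. Concretely, for $j\le i\le D-1$ pick a vertex $u$ with $d(u,x)=i-j+2$ and $d(u,y)=i$ (such a vertex exists because the relevant intersection number $p^{\,j}_{\,i-j+2,\,i}$ is positive; one checks this from $c_2\ge 2$ and the triangle-type inequalities), and also one with $d(u,x)=i+1$, $d(u,y)=i-j+1$. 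Taking the inner product of $\hat u$ with both sides of $C_j=\lambda(\hat x-\hat y)$ expresses a combination of cosines $u_{i-j+2}-u_i$ (resp.\ $u_{i+1}-u_{i-j+1}$) in terms of $\lambda$, and after dividing out the common factor $c_j$ one is left precisely with \eqref{eq:rec}. Alternatively, and perhaps more cleanly, I would expand $\langle C_j, \hat u\rangle$ by first summing $\hat z$ over $z\in\Gamma^1_{j-1}(x,y)$ and $\hat w$ over $w\in\Gamma^{j-1}_1(x,y)$ using that these sets, being ``half-way'' fibres, push forward under a single step toward $u$ in a way governed by $c_j$ and the $\gamma_i=1$ condition; this is the place where \eqref{eq:cc} and the relation $u_{i+1}=-\tfrac{\gamma_i}{a_1+2-\gamma_i}u_i$ from \eqref{eq:ui} get used.

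The main obstacle I anticipate is bookkeeping the distances: showing that for every $i$ in the range $j\le i\le D-1$ there really is a vertex at the two prescribed pairs of distances from $x$ and $y$, and that the inner products $\langle\hat z,\hat u\rangle$, $\langle\hat w,\hat u\rangle$ collapse to a single cosine value rather than a sum over several distances — this is where one needs $\gamma_{j-1}=1$ (equivalently the graph being $(j-1)$-bounded, Lemma~\ref{lm:mbd}) so that $\Gamma^1_{j-1}(x,y)$ is a clique-like singleton-fibred structure. Once those geometric facts are nailed down, the identity \eqref{eq:rec} should drop out as a formal consequence of $C_j=\lambda F_j$ with $\lambda$ as computed above. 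A secondary point to be careful about is the boundary case $i=D-1$, where $i+1=D$ and one must use the second line of \eqref{eq:std} instead of the first; I would handle that by noting $a_D=c_D(a_1+1-\gamma_{D-1})/\gamma_{D-1}$ from Lemma~\ref{lm:2dgon} keeps the same recurrence valid, or simply by observing that \eqref{eq:rec} at $i=D-1$ only involves $u_D$ through $u_{i+1}-u_{i-j+1}=u_D-u_{D-j+1}$, which is still well-defined.
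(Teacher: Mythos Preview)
Your core idea---pair both sides of $C_j=\lambda F_j$ with $\hat u$ for a suitably chosen vertex $u$---is exactly the paper's approach, and the value $\lambda=c_j(u_1-u_{j-1})/(u_0-u_j)$ you compute is the paper's $t_j$. But the execution is much simpler than you fear, and several of the ingredients you line up are not used at all.

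The paper takes a single auxiliary vertex $u$ with $d(x,u)=i+1$ and $d(y,u)=i-j+1$ (your second choice; such a triple exists because one can pick $u\in\Gamma_{i+1}(x)$ and then take $y$ on a geodesic from $x$ to $u$). The crucial point you miss is that the triangle inequality \emph{alone} pins down the distances: for $z\in\Gamma^1_{j-1}(x,y)$ one gets $d(u,z)\ge d(x,u)-d(x,z)=i$ and $d(u,z)\le d(y,u)+d(y,z)=i$, hence $d(u,z)=i$; similarly $d(u,w)=i-j+2$ for every $w\in\Gamma^{j-1}_1(x,y)$. Thus $\langle\hat u,C_j\rangle=c_j(u_i-u_{i-j+2})$ and $\langle\hat u,F_j\rangle=u_{i+1}-u_{i-j+1}$ immediately, and \eqref{eq:rec} follows from $\langle\hat u,C_j\rangle=t_j\langle\hat u,F_j\rangle$.

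In particular: the condition $\gamma_{j-1}=1$ is \emph{not} needed for the inner products to collapse to single cosines (your stated obstacle is not an obstacle); the recurrence \eqref{eq:std}, equation \eqref{eq:cc}, the relation from \eqref{eq:ui}, and the adjacency operator do not enter the argument; and there is no boundary issue at $i=D-1$ since \eqref{eq:std} is never invoked. Your first auxiliary vertex (with $d(u,x)=i-j+2$, $d(u,y)=i$) does not pin down $d(u,z)$ by the triangle inequality and should be dropped.
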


\begin{proof}	
	Take three vertices $x,y,u$ with  $d(x,u)=i+1$, $d(x,y)=j$ and $d(y,u)=i-j+1$ $(j\leq i\leq D-1)$. Consider a normalized representation $w\mapsto \hat{w}$ associated with $\theta$ and $C_j,F_j$ with respect to $x,y$. 
	Then 
	\begin{align}
	\langle \hat{u}, F_j\rangle & =(u_{i+1}-u_{i-j+1}),	\label{eq:uf}\\
	\langle \hat{u}, C_j\rangle & =c_j(u_i-u_{i-j+2}).  \label{eq:uc}
	\end{align} 
	
	\begin{figure}[h!]
		\begin{center}
			\begin{tikzpicture}
			\node (x) 	at (0,0) 		[circle, draw] 	{$x$};
			\node (zc) 	at (2.5,0) 		[ellipse, draw] {$\Gamma^{1}_{j-1}(x,y)$};
			\node (wc) 	at (7,0) 		[ellipse, draw]	{$\Gamma^{j-1}_{1}(x,y)$};
			\node (y) 	at (9.5,0) 	[circle, draw] 	{$y$};
			\node (v)  	at (11,0) 		[circle, draw] 	{$u$};
			
			\path
			(x) 	edge 			(zc)
			(zc) 	edge[dashed] 	(wc)
			(wc) 	edge 			(y)
			(y)		edge[dashed]	(v);
			\end{tikzpicture}
		\end{center}
	\end{figure}
	
	By \cite[Proposition 4.4.7]{BCN}, we see that $u_j \neq u_0$ for $1 \leq j \leq D-1$ as the graph is not bipartite. It follows from Equation (\ref{eq:ff}) that $F_j$ is a non-zero vector. 
	Then we see $S^C_j=0$ implies $C_j$ and $F_j$ are linearly dependent, that is, there exists a constant $t_j$ such that $C_j=t_j F_j$. 
	Substitute Equation (\ref{eq:ff}) and (\ref{eq:cf}) into $t_j=\frac{\langle C_j,F_j\rangle}{\langle F_j,F_j\rangle}$, we see that 
	\begin{equation}	\label{eq:tj}
		t_j=c_j\frac{u_1-u_{j-1}}{u_0-u_j}. 
	\end{equation}
	Substitute Equation (\ref{eq:uf}) (\ref{eq:uc}) and (\ref{eq:tj}) into $\langle \hat{u}, C_j\rangle=t_j \langle \hat{u}, F_j\rangle$, the result follows. 
\end{proof}

\begin{remark}	\label{rmk:a}
	\begin{enumerate}[i)]
		\item When $j=2$, then $C_j$ is the zero vector and Equation (\ref{eq:rec}) always holds. 
		\item The standard sequence $(u_i)_{i=1}^D$ satisfies the following relations, and thus it is determined by the numbers $u_0, u_1, \ldots, u_j$. 
		$$
		\left\{
			\begin{aligned}
				u_{i+1} & =(u_i-u_{i-j+2})\frac{u_0-u_j}{u_1-u_{j-1}}+u_{i-j+1}, & \text{if $u_1\neq u_{j-1}$}, \\
				u_{i+1} & =u_{i-j+3},                                            & \text{if $u_1= u_{j-1}$}.
			\end{aligned}
		\right. 
		$$
	\end{enumerate}
\end{remark}

\begin{lemma} {\em (c.f. \cite[Theorem 6.3]{BV13})}
	\label{lm:relation}
	Let $\Gamma$ be a non-bipartite geometric distance-regular graph with diameter $D\geq 4$ and $\gamma_{j-1}=1$. 
	Then the following hold:
	\begin{enumerate}[i)]
		\item If $j=3$, then $c_3\leq (a_1^2+a_1+1) (a_1 +c_2+1)$ and equality holds if and only if $S^C_3=0$. 
		\item If $j=4$, then $c_4\geq (a_1^2+2a_1+2) ( c_3-(a_1+1)^2 )$ and equality holds if and only if $S^C_4=0$.
	\end{enumerate}
\end{lemma}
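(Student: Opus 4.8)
The plan is to use the hypothesis $\gamma_{j-1}=1$ to determine the initial segment of the standard sequence of the smallest eigenvalue, substitute it into the formulas for $\langle F_j,F_j\rangle$, $\langle C_j,F_j\rangle$, $\langle C_j,C_j\rangle$, and then read the claimed inequalities off the nonnegativity of $S^C_j$.

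First I would observe that, since $\gamma_i$ is nondecreasing and at least $1$ by Lemma~\ref{lm:gi}, the hypothesis $\gamma_{j-1}=1$ forces $\gamma_i=1$ for all $0\le i\le j-1$. Put $p=a_1+1$ and let $(u_i)_{i=0}^D$ be the standard sequence associated with $\theta_{\min}=-k/p$. By Lemma~\ref{lm:ds}, $\gamma_i=1$ gives $u_i+(a_1+1)u_{i+1}=0$, i.e. $u_{i+1}=-u_i/p$, for $0\le i\le j-1$; together with $u_0=1$ this yields $u_i=(-1/p)^i$ for $0\le i\le j$. I would also note that $p\ge 2$, because a geometric distance-regular graph has $\theta_{\min}=-k/p$ and $p=1$ would make $\theta_{\min}=-k$, forcing $\Gamma$ bipartite.

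Next, since $\gamma_{j-1}=1$, Equation~(\ref{eq:cc}) is available and $S^C_j$ does not depend on the chosen representation, so substituting the values $u_0,\dots,u_j$ above into (\ref{eq:ff}), (\ref{eq:cf}), (\ref{eq:cc}) makes $S^C_j=\langle C_j,C_j\rangle\langle F_j,F_j\rangle-\langle C_j,F_j\rangle^2$ completely explicit as a function of $c_j$ (with the other intersection numbers $a_1$, $c_2$, and, when $j=4$, $c_3$, as parameters). One checks that this expression is a quadratic in $c_j$ divisible by $c_j$; dividing out the strictly positive factors $c_j$ and a suitable power of $p$ reduces the inequality $S^C_j\ge 0$ to one linear in $c_j$. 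For $j=3$ the coefficient of $c_3$ in that linear form is $-(p+1)^2(p-1)<0$, which yields an upper bound on $c_3$; for $j=4$ the coefficient of $c_4$ is $(p^2-1)^2>0$, which yields a lower bound on $c_4$. This sign change is exactly why the two parts of the lemma have opposite inequality directions. Because $S^C_j\ge 0$ always holds with equality if and only if $F_j$ and $C_j$ are linearly dependent, and because only positive factors were cleared, equality in the resulting bound is equivalent to $S^C_j=0$.

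The remaining work is the polynomial identity that identifies the constant term of the linear form with the asserted bound, and this is the only step requiring care. For $j=3$ one must verify $(p^2-p+1)(p+c_2)\,(p+1)^2(p-1)=p^6+c_2p^5-p^4+(1-c_2)p^3+c_2p^2-p-c_2$, which is immediate once one writes $(p+1)^2(p-1)(p^2-p+1)=(p^2-1)(p^3+1)$; substituting $p=a_1+1$ turns the bound into $(a_1^2+a_1+1)(a_1+c_2+1)$. For $j=4$ one must verify $(p^2+1)(c_3-p^2)\,(p^2-1)^2=-p^8+(c_3+1)p^6-(c_3-1)p^4-(c_3+1)p^2+c_3$, immediate from $(p^2+1)(p^2-1)^2=(p^2-1)(p^4-1)$; substituting $p=a_1+1$ turns the bound into $(a_1^2+2a_1+2)(c_3-(a_1+1)^2)$. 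The main obstacle is therefore purely computational bookkeeping of these degree-six and degree-eight polynomials so that the extraneous terms cancel exactly; conceptually the result follows at once from the formulas already established.
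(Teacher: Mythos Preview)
Your approach is correct and essentially identical to the paper's: compute $u_i=(-1/(a_1+1))^i$ for $0\le i\le j$ from $\gamma_{j-1}=1$ via Lemma~\ref{lm:ds}, plug into (\ref{eq:ff})--(\ref{eq:cc}), and read the bounds off $S^C_j\ge 0$. The only difference is that the paper simply states the factored forms (\ref{eq:s3}) and (\ref{eq:s4}) without the intermediate polynomial bookkeeping you carry out, but your calculations agree with those formulas.
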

\begin{proof}
	Let $(u_i)_{i=0}^D$ be the standard sequence associated with the smallest eigenvalue $\theta_{\min}=-\frac{k}{a_1+1}$, 
	Since $\gamma_{j-1}=1$, Lemma \ref{lm:ds} implies that $u_i=(-\frac{1}{a_1+1})^i$ $(0\leq i\leq j)$. 
	
	By Equation (\ref{eq:ff}) (\ref{eq:cf}) and (\ref{eq:cc}), when $j=3$, we see that 
	\begin{align}	
		S^C_3=\frac{4(a_1+2)^2a_1c_3}{(a_1+1)^6} ((a_1^2+a_1+1) (a_1 +c_2+1)-c_3).	\label{eq:s3}
	\end{align}
	Then $i)$ follows. 
	
	Similarly, when $j=4$, we have
	\begin{align}	
		S^C_4=\frac{4(a_1+2)^2a_1^2c_4}{(a_1+1)^8} (c_4-(a_1^2+2a_1+2)( c_3-(a_1+1)^2 ) ).\label{eq:s4}
	\end{align}
	And $ii)$ follows. 
\end{proof}

\begin{lemma}\label{lm:sjc}
	Let $\Gamma$ be a non-bipartite geometric distance-regular graph with diameter $D\geq 4$. If there exists some $j$ satisfying $3\leq j\leq D-1$, such that $\gamma_{j-1}=1$ and $S_j^C=0$, then $\Gamma$ is a regular near $2D$-gon. 
\end{lemma}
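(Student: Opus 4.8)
The plan is to show that $\gamma_{D-1}=1$; by Lemma~\ref{lm:2dgon} this is exactly the assertion that $\Gamma$ is a regular near $2D$-gon. Write $\alpha:=-\frac{1}{a_1+1}$, let $\theta_{\min}=-\frac{k}{a_1+1}$ be the smallest eigenvalue of $\Gamma$, and let $(u_i)_{i=0}^D$ be its standard sequence; since $\Gamma$ is non-bipartite and geometric we have $\theta_{\min}\neq -k$, hence $a_1\geq 1$. The heart of the argument is the claim that the whole standard sequence is geometric, i.e.\ $u_i=\alpha^i$ for all $0\leq i\leq D$. Granting this, Lemma~\ref{lm:ds} gives $\gamma_iu_i+(a_1+2-\gamma_i)u_{i+1}=0$ for $0\leq i\leq D-1$ with $u_{i+1}=\alpha\,u_i$; dividing through by $\alpha^i\neq 0$ yields $\gamma_i(a_1+1)=a_1+2-\gamma_i$, so $\gamma_i=1$ for every $0\leq i\leq D-1$, and in particular $\gamma_{D-1}=1$.

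To prove the claim, I would first note that $\gamma_{j-1}=1$ together with the monotonicity of $(\gamma_i)$ from Lemma~\ref{lm:gi} forces $\gamma_i=1$ for $0\leq i\leq j-1$, so Lemma~\ref{lm:ds} applied to $\theta_{\min}$ gives $u_i=\alpha^i$ for $0\leq i\leq j$. Next, since $S_j^C=0$, $3\leq j\leq D-1$ and $\theta_{\min}\neq k$, Equation~(\ref{eq:rec}) holds for $(u_i)_{i=0}^D$, so by Remark~\ref{rmk:a}~ii) the sequence obeys the recursion expressing $u_{i+1}$ in terms of $u_0,\dots,u_j$ (the degenerate case $u_1=u_{j-1}$ does not arise, since $|\alpha|<1$ and $j\geq 3$ give $u_{j-1}=\alpha^{j-1}\neq\alpha=u_1$, and likewise $u_0\neq u_j$). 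It then remains to verify by induction on $i$, starting at $i=j$, that the geometric sequence $u_m=\alpha^m$ satisfies $u_{i+1}=(u_i-u_{i-j+2})\frac{u_0-u_j}{u_1-u_{j-1}}+u_{i-j+1}$: using $u_0-u_j=1-\alpha^j$, $u_1-u_{j-1}=\alpha(1-\alpha^{j-2})$ and $u_i-u_{i-j+2}=-\alpha^{\,i-j+2}(1-\alpha^{j-2})$, the factors $(1-\alpha^{j-2})$ cancel and the right-hand side collapses to $-\alpha^{\,i-j+1}(1-\alpha^j)+\alpha^{\,i-j+1}=\alpha^{\,i+1}$, which closes the induction and proves the claim.

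The only step calling for a genuine (though short) computation is this telescoping identity; everything else is bookkeeping with Lemmas~\ref{lm:ds}, \ref{lm:gi} and \ref{lm:2dgon} and with the recursion of Remark~\ref{rmk:a}. I do not anticipate a real obstacle in this lemma itself: non-bipartiteness enters only to guarantee $\theta_{\min}\neq k$ (whence $a_1\geq 1$ and the non-degeneracies $u_1\neq u_{j-1}$, $u_0\neq u_j$) and to make Equation~(\ref{eq:rec}) available via the lemma that produced it — the real work of the paper lies in later arguments that \emph{supply} the hypotheses $\gamma_{j-1}=1$ and $S_j^C=0$, not in deducing the near-polygon conclusion from them.
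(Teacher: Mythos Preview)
Your argument is correct and follows essentially the same path as the paper: from $\gamma_{j-1}=1$ obtain $u_i=(-\tfrac{1}{a_1+1})^i$ for $i\leq j$ via Lemma~\ref{lm:ds}, use the recursion from Remark~\ref{rmk:a} (available since $S_j^C=0$) to extend this to all $i\leq D$, then read off $\gamma_i=1$ for all $i$ and invoke Lemma~\ref{lm:2dgon}. The paper's proof is terser but structurally identical; you have simply spelled out the induction step and the non-degeneracy checks $u_1\neq u_{j-1}$, $u_0\neq u_j$ that the paper leaves implicit.
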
	
\begin{proof}
	Let $(u_i)_{i=0}^D$ be the standard sequence associated with the smallest eigenvalue $\theta_{\min}=-\frac{k}{a_1+1}$.
	Since $\gamma_{j-1}=1$, by Lemma \ref{lm:ds}, we see $u_i=(-\frac{1}{a_1+1})^i$ ($0\leq i\leq j$). 
	Remark \ref{rmk:a} implies that $u_i=(-\frac{1}{a_1+1})^i$ ($0\leq i\leq D$). 
	Then we see $\gamma_i=1$ ($0\leq i\leq D-1$) by Lemma \ref{lm:ds}. 
	It follows from Lemma \ref{lm:2dgon} that $\Gamma$ is a regular near $2D$-gon. 
\end{proof}

Now we give a proof of Theorem \ref{thm:main1}. 


\begin{proof}[Proof of Theorem \em \ref{thm:main1}. ]
	\begin{enumerate}[\it i)]
		\item 
		Since $a_i=c_i(a_1+1)$ $(i\leq 2)$, we see that $\gamma_2=1$ by Lemma \ref{lm:2dgon}.
		As $c_3=(a_1^2+a_1+1) (a_1 +c_2+1)$, we have $S_3^C=0$ by Lemma \ref{lm:relation}, and $\g$ is a regular near $2D$-gon by Lemma \ref{lm:sjc}. 
		Since $c_2\neq 1$, we see that $\g$ is a Hamming graph or a dual polar graph by Theorem \ref{thm:dpl}. 
		Then we check the intersection arrays (see \cite[Section 9.2 and 9.4]{BCN}) and the result follows. 
		\item The result follows in the same way.  
	\end{enumerate}
\end{proof}

\begin{corollary}	\label{cor:1}
	Let $\Gamma$ be a non-bipartite geometric distance-regular graph with diameter $D\geq 4$ and $c_2\neq 1$. 
	Then the following hold:
	\begin{enumerate}[i)]
		\item If $\gamma_2=1$ and $c_2\geq (a_1+1)^2+1$, then $\Gamma$ is the dual polar graph $^2A_{2D-1}(\sqrt{q})$. 
		\item If $\gamma_3=1$, $c_2\geq a_1+2$ and $c_4\leq (a_1+2)(a_1^2+2a_1+2)$, then $\Gamma$ is the dual polar graph $B_D(q)$ or $C_D(q)$. 
	\end{enumerate}
\end{corollary}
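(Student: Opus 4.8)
The plan is to deduce both parts from Theorem~\ref{thm:main1} by using the hypotheses on $c_2$ (and on $c_4$) to force the exact value of $c_3$, and then of $c_4$. In both parts $\gamma_2=1$ (in ii) because $\gamma_3=1$), so by Lemma~\ref{lm:mbd} the graph $\Gamma$ is $2$-bounded and by Lemma~\ref{lm:2dgon} $a_i=c_ia_1$ for $i\le 2$; moreover $a_1\ge 1$ and $c_2\ge 3$ (the latter from $c_2\ge(a_1+1)^2+1$ resp. $c_2\ge a_1+2$, using $a_1\ge1$). Hence any strongly closed subgraph of diameter $2$ is a non-bipartite geometric distance-regular graph of diameter $2$ with the same $a_1$, $c_2$ and with $\gamma_1=1$, i.e. the point graph of a thick generalized quadrangle of order $(a_1+1,\,c_2-1)$; in particular Higman's inequality gives $c_2-1\le(a_1+1)^2$.

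For i), Higman's inequality together with $c_2\ge(a_1+1)^2+1$ forces $c_2=(a_1+1)^2+1$, and a direct computation then gives $(a_1^2+a_1+1)(a_1+c_2+1)=c_2^2-c_2+1$. By Lemma~\ref{lm:relation}~i) we have $c_3\le c_2^2-c_2+1$, so it suffices to prove the matching lower bound $c_3\ge c_2^2-c_2+1$; once this is done, $c_3=(a_1^2+a_1+1)(a_1+c_2+1)$ and Theorem~\ref{thm:main1}~i) identifies $\Gamma$ as $^2A_{2D-1}(\sqrt q)$ (alternatively, $c_3$ then meets the bound of Lemma~\ref{lm:relation}~i), so $S_3^C=0$, and $\Gamma$ is a regular near $2D$-gon by Lemma~\ref{lm:sjc}, after which Theorem~\ref{thm:dpl} and a check of the intersection array finish the proof). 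For the lower bound I would first establish $\gamma_3=1$ (see below) and then argue exactly as in part ii).

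For ii), $\gamma_3=1$ gives $a_i=c_ia_1$ for $i\le 3$, which is the first hypothesis of Theorem~\ref{thm:main1}~ii), and it makes $\Gamma$ $3$-bounded, so two vertices $x,y$ at distance $3$ lie in a strongly closed subgraph $H$ of diameter $3$; $H$ is a dense regular near hexagon with $c_3(H)=c_3$. Inside $H$ every line through $y$ is at distance $2$ from $x$ (a line at distance $3$ from $x$ would lie entirely in $H_3(x)$, forcing $\gamma_3(H)=a_1+2>a_1+1$, impossible by Lemma~\ref{lm:gi}), so $c_3$ equals the number of lines of $H$ through $y$. Since every two lines of $H$ through a point lie in a unique quad, each such quad being a $GQ(a_1+1,c_2-1)$ with $c_2$ lines through each of its points, these lines and quads form a $2$-$(c_3,c_2,1)$ design, and Fisher's inequality gives $c_3-1\ge c_2(c_2-1)$, i.e. $c_3\ge c_2^2-c_2+1$; using $c_2\ge a_1+2$ this yields $c_3\ge(a_1+1)^2+(a_1+1)+1=a_1^2+3a_1+3$. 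On the other hand Lemma~\ref{lm:relation}~ii) gives $c_4\ge(a_1^2+2a_1+2)(c_3-(a_1+1)^2)$, which together with the hypothesis $c_4\le(a_1+2)(a_1^2+2a_1+2)$ forces $c_3-(a_1+1)^2\le a_1+2$, i.e. $c_3\le a_1^2+3a_1+3$. Hence $c_3=a_1^2+3a_1+3$, both estimates for $c_4$ become equalities, so $c_4=(a_1^2+2a_1+2)(c_3-(a_1+1)^2)$, and Theorem~\ref{thm:main1}~ii) gives that $\Gamma$ is $^2A_{2D-1}(\sqrt q)$, $B_D(q)$ or $C_D(q)$; comparing third intersection numbers ($^2A_{2D-1}(\sqrt q)$ has $c_3=q^2+q+1\neq q+\sqrt q+1=a_1^2+3a_1+3$) excludes the first, so $\Gamma$ is $B_D(q)$ or $C_D(q)$.

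The main obstacle is the step $\gamma_3=1$ needed in part i): there one starts from $\gamma_2=1$ only, so a priori there is no strongly closed subgraph of diameter $3$ on which to run the Fisher argument, and one must exploit that $c_2=(a_1+1)^2+1$ forces every quad to be the extremal generalized quadrangle $GQ(a_1+1,(a_1+1)^2)$. I would try to propagate this extremality — by a counting argument inside a quad together with its first neighbourhood, aiming to show that $\Gamma$ is $m$-bounded for every $m$ (a regular near $2D$-gon), or by invoking the known structure of dense near polygons all of whose quads are of Hermitian type. The remaining ingredients — the generalized-quadrangle and Higman step, the Fisher step, and the arithmetic with the intersection arrays — are routine.
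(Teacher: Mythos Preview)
Your argument for part~ii) is essentially the paper's: the paper simply cites \cite[Proposition~19]{H09} for the lower bound $c_3\ge c_2^2-c_2+1$, whereas you reprove it via the Fisher argument on lines and quads through $y$ inside a strongly closed near hexagon, and the rest of your chain of inequalities matches the paper verbatim. One small slip: the phrase ``forcing $\gamma_3(H)=a_1+2$'' is ill-posed because $H$ has diameter~$3$ (so only $\gamma_0,\gamma_1,\gamma_2$ are defined for $H$); the correct reason every line through $y$ meets $H_2(x)$ is simply the near-polygon property of $H$, equivalently $a_3(H)=c_3(H)a_1$ so that $k(H)=c_3(H)(a_1+1)$ and the $k(H)/(a_1+1)=c_3$ lines through $y$ each contribute one point to $H_2(x)$.

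For part~i) the gap you flag is real for \emph{your} route, but the paper does not take that route at all. The lower bound $c_3\ge c_2^2-c_2+1$ from \cite[Proposition~19]{H09} is invoked already under $2$-boundedness (which follows from $\gamma_2=1$ via Lemma~\ref{lm:mbd}); it does not require $\gamma_3=1$ or a near-hexagon subgraph. With that in hand the paper just sandwiches:
\[
c_2^2-c_2+1\;\le\;c_3\;\le\;(a_1^2+a_1+1)(a_1+c_2+1),
\]
the upper bound coming from Lemma~\ref{lm:relation}~i) (which needs only $\gamma_2=1$). Since
\[
\bigl(c_2^2-c_2+1\bigr)-\bigl(a_1^2+a_1+1\bigr)(a_1+c_2+1)=\bigl(c_2-(a_1+1)^2-1\bigr)(c_2+a_1),
\]
the hypothesis $c_2\ge(a_1+1)^2+1$ makes the lower bound meet the upper bound, forcing $c_3=(a_1^2+a_1+1)(a_1+c_2+1)$; Theorem~\ref{thm:main1}~i) then finishes. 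In particular neither your preliminary step $\gamma_3=1$ nor the Higman-inequality pinning of $c_2$ is needed --- the sandwich already forces $c_2=(a_1+1)^2+1$ as a byproduct. So the missing idea in your approach to~i) is precisely that the Hiraki-type bound on $c_3$ is available from $2$-boundedness alone; once you use it, the detour through Hermitian quads disappears.
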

\begin{proof}
	\begin{enumerate}[\it i)]
		\item $\gamma_2=1$ implies that $\Gamma$ is $2$-bounded by Lemma \ref{lm:mbd}. 
		Then $ c_2^2-c_2+1\leq c_3 \leq (a_1^2+a_1+1) (a_1 +c_2+1)$, where the inequalities are followed from \cite[Proposition 19]{H09} and Lemma \ref{lm:relation}, respectively. 
		We see $c_2\geq (a_1+1)^2+1$ implies that $c_3 = (a_1^2+a_1+1) (a_1 +c_2+1)$ and the result follows from Theorem \ref{thm:main1}. 
		
		\item $\gamma_3=1$ implies that $\Gamma$ is $3$-bounded by Lemma \ref{lm:mbd}. 
		Then $c_3\geq c_2^2-c_2+1$ by \cite[Proposition 19]{H09}. 
		Together with Lemma \ref{lm:relation}, we see $c_4\geq (a_1^2+2a_2+2)(c_2^2-c_2+1-(a_1+1)^2)$. 
		And $c_2\geq a_1+2$ and $c_4\leq (a_1+2)(a_1^2+2a_1+2)$ implies that $c_4= (a_1^2+2a_1+2) ( c_3-(a_1+1)^2 )$ and $c_2=a_1+2$. 
		The result follows from Theorem \ref{thm:main1}. 
	\end{enumerate}
\end{proof}

\section{Proof of Theorem \ref{thm:main2}}
\begin{proof}[Proof of Theorem \em \ref{thm:main2}]
	As $a_1=1$ and the smallest eigenvalue $\theta_{\min}=-\frac{k}{2}$, we see that each triangle is a Delsarte clique, and $\g$ is geometric. 
	
	If the diameter $D\leq 4$, the result follows from \cite[Theorem 1.2]{KQ17}. So we may assume $D\geq 5$. 
	
	Note that $1\leq \gamma_i\leq a_1+1=2$ and $\gamma_i$ is increasing ($0\leq i\leq D-1$) by Lemma \ref{lm:gi}. If $\gamma_{D-1}=1$, then $\Gamma$ is a regular near $2D$-gon by Lemma \ref{lm:2dgon}. 
	Then by \cite[Theorem 9.11]{DKT}, we see that $\Gamma$ is $B_D(2)$. 
	So we may assume $j$ is the smallest number satisfying $2\leq j\leq D-1$, such that $\gamma_j=2$. 
	If $j\geq 5$, then $\gamma_4=1$. 
	And $\Gamma$ is $4$-bounded by Lemma \ref{lm:mbd}, with $\Delta_4$ a strongly closed subgraph with diameter $4$. 
	Then $\Delta_4$ is a regular near octagon with the same intersection number $c_i$ and $a_i$ $(i\leq 4)$ as $\Gamma$ (especially $c_2=3$). 
	By \cite[Theorem 9.11]{DKT}, we see that $\Gamma$ is $B_4(2)$. 
	Then by Theorem \ref{thm:main1}, we see that $\Gamma$ is the dual polar graph $B_D(q)$. 
	So we may assume $j\leq 4$. 
	Let $(u_i)_{i=0}^D$ be the standard sequence associated with $\theta_{\min}$. Then by Lemma \ref{lm:ds}, we see that 
	\begin{align}
	\left\{
	\begin{aligned}
	&u_i  =(-\frac{1}{2})^i,&\quad i&\leq j,      \\
	&u_i  =(-\frac{1}{2})^{2j-i},&\quad i&\geq j.	\label{eq:ud}
	\end{aligned}
	\right.
	\end{align}
	Since $|u_i|\leq 1$, we see that $D\leq 2j$. Since $j\leq 4$, we see $D\leq 8$. 
	
	By Lemma \ref{lm:biggs}, we see that the multiplicity $m$ of $\theta_{\min}$ satisfies 
	\begin{align}
		m=\frac{\sum_{i=0}^D k_i}{\sum_{i=0}^D k_i u_i^2}\leq \max_{0\leq i\leq D} \frac{1}{u_i^2}=4^j.
	\end{align} 
	By \cite[Theorem 4.4.4]{BCN}, we have $k\leq 2m \leq 2^{2j+1}$. 
	
	Let $\Delta$ be a strongly closed subgraph of $\g$ with diameter $j-1$. 
	Note that $\Delta$ is determined by any pair of vertices in $V(\Delta)$ at distance $j-1$. 
	Choose two vertices $x,y\in V(\Delta)$ with $d(x,y)=j-2$. 
	Denote $S=\g(y)\cap\g_{j-1}(x)$ with $|S|=b_{j-2}$ in the graph $\g$, 
	and $T=S\cap V(\Delta)$ with $|T|=b_{j-2}'$, where $b_i'$ denotes the corresponding intersection numbers of $\Delta$. 
	Then for any $z\in S\cap V(\Delta)$, the strongly closed subgraph with diameter $j-1$ containing $x,z$ is also $\Delta$. 
	It follows that $S$ is partitioned by its intersection with strongly closed subgraphs containing $x$ with diameter $j-1$ , and 
	\begin{equation}	\label{eq:divk}
		b_{j-2}'\mid b_{j-2}=k-2c_{j-2}.
	\end{equation}  
	
	Now we consider $5\leq D\leq 8$. As $\frac{D}{2}\leq j\leq 4$, the only possible $(j,D)$ are the following
	$$
	L=\{(3,5),(4,5),(3,6),(4,6),(4,7),(4,8)\}.
	$$
	By Lemma \ref{lm:2dgon}, we see that 
	\begin{align}
	\left\{
	\begin{aligned}	\label{eq:int}
	& a_i  =  c_i, & \quad i & \leq j-1, \\
	& a_i  =  k/2, & \quad i & =j,       \\
	& a_i =  b_i,  & \quad i & \geq j+1.
	\end{aligned}
	\right.
	\end{align}
	
	If $j=D/2$, by Equation (\ref{eq:ud}), we see that $u_D=1$ and $\Gamma$ is antipodal by \cite[Proposition 4.4.7]{BCN}. Then $b_i=c_{D-i}$ for $i\neq D/2$ by \cite[Proposition 4.2.2]{BCN}. 
	When $(j,D)$ is $(3,6)$ or $(4,8)$, by Equation (\ref{eq:int}), the intersection number of $\Gamma$ is determined by $c_3$ or $(c_3,c_4)$, respectively. 
	If $j\neq D/2$, then the intersection number of $\Gamma$ is determined by $(c_3,\ldots,c_{D-1})$. 
	
	If $j=4$, as $\Gamma$ is $3$-bounded, $\Gamma$ contains a strongly closed subgraph $\Delta_3$ with diameter $3$, which is a regular near hexagon. 
	Then by \cite[Theorem 1.2]{KQ17}, the graph $\Delta_3$ is $B_D(2)$ or the Witt graph associated to $M_{24}$, with $c_3=7$ or $15$, respectively.
	
	Now we give a list of necessary conditions:
	\begin{enumerate}[\it i)]
		\item $k\leq 2^{2j+1}$.
		\item If $(j,D)=(3,6)$, then $4\mid k-2$ (Equation (\ref{eq:divk})) and the array is determined by $c_3$.
		\item If $(j,D)=(4,8)$, then $c_3=7$ and $8\mid k-6$, or $c_3=15$ and $24\mid k-6$. The array is determined by $(c_3,c_4)$.
		\item If $(j,D)=(3,5)$, then $4\mid k-2$ and the array is determined by $(c_3,c_4)$.
		\item If $(j,D)=(4,5),(4,6)$ or $(4,7)$, then $c_3=7$ and $8\mid k-6$, or $c_3=15$ and $24\mid k-6$. The array is determined by $(c_3,\ldots,c_{D-1})$.
	\end{enumerate}
	
	For all possible pairs $(j,D)\in L$, no array $(k,b_1,\ldots, b_D;c_1,c_2,c_3,\ldots,c_D)$ satisfies the above necessary conditions and Lemma \ref{lm:f}. And the result follows.     
\end{proof}
\begin{remark}
	\begin{enumerate}[i)]
		\item The dual polar graphs $B_D(2)$ and $C_D(2)$ are isomorphic. 
		\item See \cite[Theorem 1.1]{KQ1702} for a result similar to Corollary \ref{cor:1} i). And for the case $c_2=5$ in Theorem \ref{thm:main2}, see \cite[Theorem 1.2]{KQ1702}.	
	\end{enumerate}
\end{remark}
\noindent{\bf Acknowledgments.}\\
Z.Q. is partially supported by Sichuan Normal University (Grant No. 341569001). J.H.K. is partially supported by the National Natural Science Foundation of China (Grant No. 11471009 and Grant No. 11671376).

\bibliographystyle{plain}
\bibliography{bib}
\end{document}